\def\subjclass#1{{\renewcommand{\thefootnote}{}%
\footnote{\emph{Mathematics Subject Classification (2010):} #1}}}
\DeclareMathOperator{\curl}{curl}
\DeclareMathOperator{\dv}{div}
\date{\today}
\theoremstyle{plain}
\newtheorem{Thm}{Theorem}
\newtheorem{Rem}[Thm]{Remark}
\newtheorem{Lem}[Thm]{Lemma}
\newtheorem{Cor}[Thm]{Corollary}
\newtheorem{Def}[Thm]{Definition}
\newcommand {\p}{\partial}
\newcommand{\q}{\quad}
\def\lam{\lambda}
\def\O{\Omega}
\def\H{\mathbf H}
\def\u{\mathbf u}
\def\w{\mathbf w}
\def\v{\vskip}
\numberwithin{equation}{section}
\numberwithin{Thm}{section}
\begin{document}
\large

\title[Estimate for a Maxwell-type system in convex domains]
{On an $H^r(\curl,\O)$ estimate for a Maxwell-type system in convex domains}

\author[]{Xingfei Xiang}

\address{School of Mathematical Sciences,
Tongji University, Shanghai 200092, P.R. China}
\email{xfxiang@tongji.edu.cn}

\thanks{ }

\keywords{Maxwell system, $\curl$, convex domains, $L^r-$data}

\subjclass{26D10; 35B65; 46E40; 35Q61}
\begin{abstract}

In bounded convex domains,
the regularity estimates of a vector field $\u$
with its $\dv\u$, $\curl\u$ in $L^r$ space and
the tangential components
or the normal component of $\u$ over the
boundary in $L^r$ space, are established for $1<r<\infty$.
As an application, we derive an $H^r(\curl,\O)$ estimate
for solutions to a 
Maxwell-type system with an inhomogeneous  boundary condition
in convex domains.
\end{abstract}
\maketitle

\section{Introduction}\label{section1}

This paper is concerned with the regularity of a vector field $\u$
with its $\dv\u, \curl\u\in L^r(\O)$ and the tangential
components $\nu\times\u$
or the normal component $\nu\cdot\u$ on
boundary in $L^r(\p\O)$,
where $1<r<\infty$,  $\O$ is a bounded convex domain in $\mathbb{R}^3$
and
$\nu(x)$  denotes the unit outer normal vector at $x\in\p\O$.
Based on the established estimates,
we then study the well-posedness of the following Maxwell-type system
\begin{equation}\label{1.10}
\curl (A(x)\curl\u)+\u=\mathbf F+\curl\mathbf f\q
\text{in }\O,\q \nu\times\u=\mathbf g\q \text{on }\p\O,
\end{equation}
where the coefficient
$A(x) = (a^{ij} (x))$ denotes a $3\times3$ matrix
with real-valued, bounded, measurable
entries satisfying the uniform ellipticity condition
$$
\lambda |\xi|^2 \leq \sum_{i,j=1}^3 a^{ij}
\xi_i \xi_j \leq\Lambda |\xi|^2
$$
for all $\xi\in\mathbb R^3$ and for some
positive constants $0<\lam<\Lambda<\infty$.

 Before stating  our main results we would like to mention that,
the regularity estimates of a vector
field $\u$ by means of $\dv\u$ and $\curl\u$ are fundamental
questions, and such estimates are useful in the study of various
partial differential systems including Navier-Stokes equations in
fluid mechanics, Maxwell's equations in electromagnetism
field, and Ginzburg-Landau system for superconductivity.
For smooth domains,
the estimates on Sobolev spaces $W^{1,r}$
with $1<r<\infty$ are well-known. We refer to \cite{KY1, W1} for details.

In the case of non-smooth domains, Costabel in \cite{Cos}
considered the $\dv$-$\curl$ estimates when $r=2$
in Lipschitz domains
and showed the $H^{1/2}(\O)$ regularity for vector
fields.
These results were generalized to $r\in (3/2-\epsilon, 2+\epsilon)$
with $\epsilon$ depending on the Lipschitz character
of domains
by D. Mitrea, M. Mitrea and J. Pipher (see \cite{MMP}), and also the range for $r$
is sharp (see \cite{DK, FMM}). It
should also be noted in \cite{JKD}
that if the boundary $\p\O\in C^1,$ then
one can obtain the corresponding
estimates for $r\in (1, \infty)$.
One may ask, under what additional
conditions (weaker than $C^1$ regularity)
for Lipschitz domains, the range for $r$
can be extended to  the interval $(1, \infty)$?

Note that any convex domain is Lipschitz
but may not be $C^1$, and also the convexity
of the domain may improve the regularity,
see for instance \cite{CM, CM1, Geng-Shen, MMY}.
Therefore, it is important to examine
the estimates in convex domains.
To state our results, we need to introduce
the well-known
Bessel potential spaces $L_{\alpha}^r(\O)$
and Besov spaces $B_{\alpha}^{r,q}(\O)$, see \cite{JKD}.
First, we define $L_{\alpha}^r(\mathbb R^3)$ by
$$
L_{\alpha}^r(\mathbb R^3)=\left\{(I-\Delta)^{-\alpha/2}g~:~g\in L^r(\mathbb R^3)\right\}
$$
with norm
$$
\|f\|_{L_{\alpha}^r(\mathbb R^3)}=\left\|(I-\Delta)^{\alpha/2}f\right\|_{L^r(\mathbb R^3)},
$$
where
$$
(I-\Delta)^{\alpha/2}=\mathcal {F}^{-1}\left(1+|\xi|^2\right)^{\alpha/2}\mathcal {F}
$$
and $\mathcal {F}$ is the Fourier transform.
Define  $L_{\alpha}^r(\O)$
as the space of restrictions of functions
in $L_{\alpha}^r(\mathbb R^3)$ to $\O$ with the
usual quotient norm
$$
\|f\|_{L_{\alpha}^r(\O)}=\inf\left\{\|h\|_{L_{\alpha}^r(\mathbb R^3)}~:~h=f\q \text{in }\O\right\}.
$$
Let $0<\alpha<1,$
$1\leq r\leq \infty$ and $1\leq q<\infty$. We say that
a function $f$ belongs to
 Besov space $B_{\alpha}^{r,q}(\mathbb R^3)$ if the norm
$$
\|f\|_{L^r(\mathbb R^3)}
+\left(\int_{\mathbb R^3}\frac{\|f(x+t)-f(x)\|_{L^r}^q}{|t|^{3+\alpha q}}dt\right)^{1/q}<\infty.
$$
Define the space $B_{\alpha}^{r,q}(\O)$
as the space of restrictions of functions
in $B_{\alpha}^{r,q}(\mathbb R^3)$ to $\O$
with the usual
quotient norm.

Suppose $1<r<\infty$ and $\alpha>0.$
Then we have the following inclusion relations (see Theorem 5 in \cite[Chapter V]{ST})
$$
B_{\alpha}^{r,2}\subset L_{\alpha}^r\subset B_{\alpha}^{r,r}\q \text{if }r\geq 2;\q
B_{\alpha}^{r,r}\subset L_{\alpha}^r\subset B_{\alpha}^{r,2}\q \text{if }r\leq 2.
$$

 The first result now
reads:

\begin{Thm}\label{thm1.1}
Let $\O$ be a bounded convex domain
in $\mathbb R^3$.
Assume that $\dv\u\in L^r(\O)$,
$\curl\u\in L^r(\O)$
and  $\nu\cdot\u\in L^r(\p\O)$
with $2\leq r<\infty.$ Then $\u\in L_{1/r}^r(\O),$
and we have the estimate
\begin{equation}\label{1.2}
\|\u\|_{L_{1/r}^r(\O)}\leq C\left(\|\dv\u\|_{L^r(\O)}+\|\curl\u\|_{L^r(\O)}
+\|\nu\cdot\u\|_{L^r(\p\O)}\right),
\end{equation}
where the constant $C$ depends on $r$ and
the Lipschitz character of $\O.$
\end{Thm}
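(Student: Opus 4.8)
\emph{Strategy.} The plan is to produce a Helmholtz-type splitting $\u=\w+\nabla p$ in which $\w$ carries the rotational data $\curl\u$ and $p$ carries $\dv\u$ together with the normal boundary data $\nu\cdot\u$, and then estimate each piece separately by elliptic regularity. The only step that uses convexity is the scalar Neumann estimate; everything else is valid on any bounded star-shaped Lipschitz domain.

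\emph{Step 1 (rotational part via a Poincaré operator).} Since $\dv(\curl\u)=0$ in $\O$ in the distributional sense and $\curl\u\in L^r(\O)$, and since a bounded convex domain is star-shaped with respect to an open ball, the regularized Poincaré (homotopy) integral operator for the de Rham complex on $\O$ — see Costabel--McIntosh — furnishes a vector field $\w$ with
$$
\curl\w=\curl\u\q\text{in }\O,\qq \|\w\|_{W^{1,r}(\O)}\le C\|\curl\u\|_{L^r(\O)}.
$$
In particular $\w\in W^{1,r}(\O)=L_1^r(\O)\subset L_{1/r}^r(\O)$ and $\dv\w\in L^r(\O)$, while the trace of $\w$ lies in $B_{1-1/r}^{r,r}(\pO)$, so $\nu\cdot\w\in L^r(\pO)$.

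\emph{Step 2 (irrotational part via the Neumann problem).} The field $\u-\w$ (which lies in $L^2(\O)$, as is implicit in the hypothesis that $\nu\cdot\u$ is a boundary trace) is curl-free in $\O$, and $\O$ is simply connected; hence $\u-\w=\nabla p$ for some $p\in H^1(\O)$. Testing this identity against $v\in H^1(\O)$ and using the divergence theorem shows that $p$ is, up to an additive constant, the weak solution of the Neumann problem
$$
\Delta p=\dv\u-\dv\w\q\text{in }\O,\qquad \pnu p=\nu\cdot\u-\nu\cdot\w\q\text{on }\pO,
$$
whose data belong to $L^r(\O)$ and $L^r(\pO)$ respectively, and whose solvability constraint is satisfied automatically since $p$ exists by construction. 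Invoking the $L^r$-regularity of the Neumann problem on bounded convex domains in the range $2\le r<\infty$, one gets $p\in L_{1+1/r}^r(\O)$ with
$$
\|p\|_{L_{1+1/r}^r(\O)}\le C\left(\|\dv\u\|_{L^r(\O)}+\|\dv\w\|_{L^r(\O)}+\|\nu\cdot\u\|_{L^r(\pO)}+\|\nu\cdot\w\|_{L^r(\pO)}\right),
$$
hence $\nabla p\in L_{1/r}^r(\O)$. Combining with Step 1 yields $\u=\w+\nabla p\in L_{1/r}^r(\O)$ together with the estimate \eqref{1.2}.

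\emph{Main difficulty.} The crux is the regularity estimate used in Step 2, namely $\|p\|_{L_{1+1/r}^r(\O)}\lesssim\|\Delta p\|_{L^r(\O)}+\|\pnu p\|_{L^r(\pO)}$ for the Neumann problem in the full range $2\le r<\infty$; this is exactly where convexity is indispensable, since on a general Lipschitz domain such a bound fails outside a bounded interval of exponents. I would derive it from the boundary-term-free $H^2$ bound available on convex domains (Kadlec, Grisvard), approximation by smooth convex domains with uniform constants, and a Jerison--Kenig-type passage to the $L^r$ scale — or, equivalently, appeal directly to the convex-domain $L^r$ estimates for the Neumann problem of Adolfsson--Jerison and related work. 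The remaining ingredients — the mapping properties of the Poincaré operator, the de Rham / simple-connectedness argument, and the embeddings among the Bessel and Besov spaces recalled above — are standard.
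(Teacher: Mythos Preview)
Your argument is correct, and it takes a genuinely different route from the paper's.

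The paper also splits $\u=\nabla p+\curl\w$, and handles the gradient part exactly as you do, via the Geng--Shen convex-domain Neumann estimate. The divergence between the two proofs lies in the rotational part. The paper does \emph{not} build $\w$ by a Poincar\'e operator; instead it takes $\w$ to be the solution of the boundary value problem $\curl\curl\w=\curl\u$, $\dv\w=0$ in $\O$, $\nu\times\w=0$ on $\pO$, and then has to show $\curl\w\in L^r_{1/r}(\O)$. This costs substantial work: an $L^\infty$ bound $\|\curl\w\|_{L^\infty}\le C\|\curl\u\|_{L^{3,1}}$ is proved by a Cianchi--Maz'ya level-set argument (exploiting convexity a second time, through the sign of the boundary curvature term), and the $L^r_{1/r}$ estimate is then obtained by complex interpolation between this endpoint and the $H^1$ bound at $r=2$. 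Your use of the Costabel--McIntosh regularized Poincar\'e operator short-circuits all of this: it delivers $\w\in W^{1,r}\subset L^r_{1/r}$ in one stroke, and the price --- that $\w$ carries no prescribed boundary condition --- is harmless because the resulting extra terms $\dv\w$ and $\nu\cdot\w$ are absorbed into the Neumann data for $p$.

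What the paper's longer route buys is the intermediate $L^\infty$ estimate itself (recorded as a corollary: $\|\u\|_{L^\infty}\lesssim\|\dv\u\|_{L^{3,1}}+\|\curl\u\|_{L^{3,1}}$ under vanishing tangential or normal trace), which is later reused in the proof of the $H^r(\curl,\O)$ estimate for the Maxwell-type system. Your approach yields Theorem~\ref{thm1.1} more cheaply but does not produce that $L^\infty$ bound as a byproduct.
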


To prove Theorem \ref{thm1.1}, we apply  the
Helmholtz-Weyl decomposition for vector fields
 in bounded domains
(see \cite[Theorem 2.1]{KY1}):
\begin{equation}\label{1.4}
\u=\nabla p_{\u}+\curl\mathbf w_{\u}.
\end{equation}
Our strategy is to get the estimates
for the gradient part $\nabla p_{\u}$
and for the curl part $\curl\mathbf w_{\u}$ respectively.
The gradient part $\nabla p_{\u}$ satisfies
the Laplace equation with
Neumann boundary condition, which can be established
by the result of  Geng and Shen in \cite{Geng-Shen}
for Laplace-Neumann problem.
For the estimate of $\curl\mathbf w_{\u}$,
 the vector $\mathbf w_{\u}$ satisfies 
 a curl-curl system (see \eqref{2.1}). As the proof of Theorem 5.15(a)
in \cite{JKD} by Jerison and  Kenig,
it suffices to establish the $L^{\infty}$
estimate for $\curl\mathbf w_{\u}$. To prove this,
we shall use the technique developed by Cianchi
and Maz'ya in \cite{CM, CM1}
in which the $L^{\infty}$ gradient estimates of solutions
to the divergence form elliptic systems
with Uhlenbeck type structure were treated.
At last, by the complex interpolation,
we can obtain the $L_{1/r}^r(\O)$
estimate for $\curl\mathbf w_{\u}$ if $2\leq r<\infty$.

\begin{Rem} We need to mention that
for Lipschitz domains,
D. Mitrea, M. Mitrea and J. Pipher in \cite{MMP}
obtained the $B_{1/r}^{r, 2}(\O)$ estimates
under the assumptions of Theorem \ref{thm1.1}
if
 $r\in (3/2-\epsilon, 2]$
with $\epsilon$ depending on the Lipschitz character
of  domains.
The $B_{1/r}^{r, 2}(\O)$ estimate for $r\in(1, 3/2-\epsilon]$ is still open.
\end{Rem}

For the tangential component $\nu\times\u$
given, we have

\begin{Thm}\label{thm1.2}
Let $\O$ be a bounded convex domain
in $\mathbb R^3$.
Assume that $\u\in L^r(\O)$, $\dv\u\in L^r(\O)$,
$\curl\u\in L^r(\O)$
and  $\nu\times\u\in L^r(\p\O)$
with $1<r<\infty.$ Then
\begin{equation}\label{1.1}
\|\nu\cdot\u\|_{L^r(\p\O)}\leq C\left(\|\dv\u\|_{L^r(\O)}+\|\curl\u\|_{L^r(\O)}
+\|\nu\times\u\|_{L^r(\p\O)}\right),
\end{equation}
where the constant $C$ depends on $r$ and
the Lipschitz character of $\O.$
Also, we have $\u\in L_{1/r}^r(\O)$ if $2\leq r<\infty$
and $\u\in B_{1/r}^{r, 2}(\O)$ if $1<r<2.$
\end{Thm}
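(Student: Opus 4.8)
The plan is to reduce the tangential-data case to the normal-data case already handled in Theorem \ref{thm1.1}. The key observation is that $\|\nu\cdot\u\|_{L^r(\p\O)}$ is, up to lower-order terms, the quantity we would like to control, and once we have it the remaining conclusions follow immediately by feeding $\u$ back into Theorem \ref{thm1.1}. So the heart of the matter is proving the boundary inequality \eqref{1.1}. First I would split $\u$ via the Helmholtz--Weyl decomposition \eqref{1.4}, $\u=\nabla p_{\u}+\curl\w_{\u}$, where $p_{\u}$ solves a Neumann problem with datum $\dv\u$ and $\w_{\u}$ solves the curl-curl system \eqref{2.1} with $\curl\u$ on the right-hand side. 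For the curl part $\curl\w_{\u}$, it has the same regularity as produced in the proof of Theorem \ref{thm1.1} (the $L^\infty$ estimate via the Cianchi--Maz'ya technique, then complex interpolation / Besov embedding), so in particular its trace lies in $L^r(\p\O)$ with norm controlled by $\|\curl\u\|_{L^r(\O)}$; this disposes of the divergence-free, tangentially-determined contribution. What remains is to control the normal trace of $\nabla p_{\u}$, equivalently the normal trace of a gradient field, in terms of $\dv\u$ in $L^r(\O)$ and the tangential trace $\nu\times\u$ on $\p\O$.

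The main device here is integration by parts on the boundary together with the structure of convex domains. Writing $u=\nu\cdot\u$ and using that on a Lipschitz boundary the surface gradient of $u$ and the surface divergence of $\nu\times\u$ are tied together through the second fundamental form, one obtains a relation of the schematic form $\nabla_{\!\p\O}(\nu\cdot\u) = (\text{tangential part of }\curl\u) - \mathcal{B}(\nu\times\u)$ on $\p\O$, where $\mathcal{B}$ involves the Weingarten map. Since $\O$ is convex the second fundamental form has a sign (as a distributional measure it is nonnegative), which is exactly the ingredient that converts this first-order boundary identity into an $L^r(\p\O)$ bound: one tests the identity against $|\nu\cdot\u|^{r-2}(\nu\cdot\u)$, integrates by parts on $\p\O$, and the curvature term appears with a favorable sign so it can be discarded rather than estimated. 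The terms that survive are $\curl\u$ (interior datum, handled by a trace estimate after an interior $L^r$ bound on $\u$, which is available since $\u\in L^r(\O)$ with $\dv\u,\curl\u\in L^r$) and $\nu\times\u$, producing \eqref{1.1}. I expect this curvature-sign step to be the main obstacle: making the boundary integration by parts rigorous on a merely Lipschitz (convex) surface requires an approximation argument, smoothing $\p\O$ from inside by convex domains and passing to the limit while keeping the sign of the curvature terms, and checking that all traces converge in the right topology.

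Once \eqref{1.1} is established, the rest is bookkeeping. The inequality shows $\nu\cdot\u\in L^r(\p\O)$ with the stated bound; then $\u$ satisfies all the hypotheses of Theorem \ref{thm1.1} when $2\le r<\infty$, and substituting \eqref{1.1} into \eqref{1.2} yields $\u\in L_{1/r}^r(\O)$ with norm controlled by $\|\dv\u\|_{L^r(\O)}+\|\curl\u\|_{L^r(\O)}+\|\nu\times\u\|_{L^r(\p\O)}$. For $1<r<2$, the $L^\infty$-plus-interpolation scheme behind Theorem \ref{thm1.1} only delivers the Besov space $B_{1/r}^{r,2}(\O)$ rather than the Bessel potential space (this is the same gap recorded in the Remark after Theorem \ref{thm1.1} and reflects the inclusion $B_{1/r}^{r,r}\subset L_{1/r}^r\subset B_{1/r}^{r,2}$ for $r\le 2$); so in that range we obtain $\u\in B_{1/r}^{r,2}(\O)$, which is precisely what the statement claims. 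A small point to verify carefully is that the trace estimate used to bound $\curl\w_{\u}$ and $\curl\u$ on $\p\O$ in the $L^r$ norm is legitimate at the regularity level $L_{1/r}^r(\O)$ (respectively $B_{1/r}^{r,2}(\O)$) — this is the endpoint trace theorem for these spaces, for which convexity (hence the Lipschitz character) suffices.
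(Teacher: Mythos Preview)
Your central step --- the curvature-sign integration by parts on $\p\O$ --- does not go through. The schematic identity $\nabla_{\p\O}(\nu\cdot\u) = (\text{tangential part of }\curl\u) - \mathcal{B}(\nu\times\u)$ is incomplete: any such formula also contains the normal derivative of the tangential part of $\u$, which is not controlled by the data (already in the half-space, where $\mathcal{B}=0$, the tangential components of $\curl\u$ involve $\p_\nu u_1,\p_\nu u_2$, and these cannot be absorbed into $\nabla_{\mathrm{tan}}(\nu\cdot\u)$). Even granting a correct identity, testing it against the scalar $|\nu\cdot\u|^{r-2}(\nu\cdot\u)$ produces nothing: the left side is $\tfrac{1}{r}\nabla_{\mathrm{tan}}|\nu\cdot\u|^r$, which integrates to zero over the closed surface $\p\O$, while the curvature term is an off-diagonal pairing of $\mathcal{B}(\nu\times\u)$ against a function of the \emph{normal} component, on which convexity (i.e.\ $\mathcal{B}(v,v)\geq 0$ for tangential $v$) imposes no sign. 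The curvature-sign mechanism is intrinsically quadratic; in this paper it appears only in the $L^\infty$ bound for the homogeneous problem (Lemma~\ref{lem3.1}), with the $L^r$ estimate obtained afterwards by interpolation, not from a pointwise-signed $L^r$ boundary identity. There is also a secondary gap: the ``endpoint trace theorem'' $L_{1/r}^r(\O)\to L^r(\p\O)$ you invoke for $\curl\w_{\u}$ is exactly the critical index at which the trace fails in general.

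The paper's route is different and sidesteps both issues. It uses the \emph{Dirichlet}-based decomposition $\u=\nabla\hat p+\curl\hat\w$ with $\hat p=0$ on $\p\O$ and $\nu\cdot\hat\w=0$ (system~\eqref{2.2}), so that $\|\nabla\hat p\|_{L^r(\p\O)}$ follows, after subtracting a Newtonian potential, from the $L^r$ Dirichlet regularity for the Laplacian in convex domains (Mitrea--Mitrea--Yan). For $\curl\hat\w$ it constructs an explicit auxiliary field $\hat{\mathbf v}=\phi-\zeta$ (Newtonian potential of $\curl\u$ minus a single-layer potential with density $\nu\times\u$), reducing the boundary estimate to Calder\'on--Zygmund bounds, $L^r$ boundedness of the double layer on Lipschitz surfaces, and the $L^r$ Neumann regularity of Geng--Shen; the remainder $\curl(\hat\w-\hat{\mathbf v})$ is then the gradient of a harmonic function with prescribed tangential gradient, handled again by the convex-domain Dirichlet regularity. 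Convexity thus enters through these Laplace results for all $1<r<\infty$, not through a signed boundary identity. The concluding regularity $\u\in L_{1/r}^r(\O)$ or $B_{1/r}^{r,2}(\O)$ is read off from the integral-representation theory of Mitrea--Mitrea--Pipher rather than by invoking Theorem~\ref{thm1.1}, which is unavailable for $1<r<2$.
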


To obtain the
estimate of $\nu\cdot \u$ on
boundary, the method of the complex interpolation is no longer
applied.
Our strategy now is by introducing a divergence-free vector
such  that the boundary estimate can be
reduced to the estimates of a double layer potential
and the Laplace equation with
Dirichlet boundary condition.

With Theorem \ref{thm1.1} and Theorem \ref{thm1.2}
at our disposal, following the real variable method used
in \cite{Geng} by Geng in Lipschitz domains, we then study
the $H^r(\curl, \O)$ well-posedness of the Maxwell-type system \eqref{1.10}
in convex domains.

We mention that if the coefficient matrix $A(x)$ is
taken to be a constant,  M. Mitrea, D.
Mitrea and J. Pipher in \cite{MMP} considered the $L^p$ estimates of
inhomogeneous boundary value problems
for Maxwell equations in Lipschitz
domains; while  M. Mitrea in \cite{MM}
showed the well-posedness in the Sobolev-Besov spaces $H^{s,r}_0 (curl; \O)$ with
the smoothness index $s$ and the integrability index $r$
belonging to $\mathcal {R}_{\O},$ where
$\mathcal {R}_{\O}$ defined in \cite{JKD} (also see \cite{MM} for details)
is the optimal range of solvability of Poisson
equation with inhomogeneous Dirichlet or Neumann boundary condition in
Sobolev-Besov $L_s^r(\O)$ spaces.
For system \eqref{1.10} with the $W^{s,\infty}$-regular
matrix $A(x),$
Kar and Sini in \cite{KS} recently, by the perturbation argument,
derived an $H^{s, r}_0 (curl; \O)$
estimate if the indices $(s, r)$ lie in a
small region in the interior of $\mathcal {R}_{\O}.$

In contrast to the method used in \cite{KS}, we will
apply the real variable method which was used in \cite{Geng} to treat
the $\dv(A(x)\nabla)$ operator,
 to the $\curl(A(x)\curl)$ operator.
As in \cite{Geng}, we also assume that
 the coefficient $A(x)$ belongs to $\mathrm{VMO}(\O),$ that is
$$
\lim_{r\to 0}\sup_{\rho\leq r}\frac{1}{|\O_{\rho}|}
\int_{\O_{\rho}}\Big|a^{ij}(x)-\frac{1}{|\O_{\rho}|}
\int_{\O_{\rho}}a^{ij}(y)dy\Big|dx=0,
$$
where  $\O_{\rho}$ is the intersection
$\O\bigcap B_{\rho}$ with Lebesgue
measure $|\O_{\rho}|$, and $B_{\rho}$ denotes the ball
with radius $\rho$ centered at the points of $\O$.
The following  spaces $H^{r}(\curl,\O)$ for
$1<r<\infty$ are well known:
$$
H^r(\curl,\O)=\left\{\u\in L^r(\O)~:~\curl\u\in L^r(\O)\right\}.
$$
For $1<r<\infty$ and $0<s<1$, we let $B^{s, r}(\p\O)$
denote the Besov space consisting of measurable functions on $\p\O$
such that
$$
\|f\|_{B^{s, r}(\p\O)}:=\|f\|_{L^{r}(\p\O)}+
\left(\int_{\p\O}\int_{\p\O} \frac{|f(P)-f(Q)|^r}{|P-Q|^{2+sr}}
d\sigma(P)d\sigma(Q)\right)^{1/r}<\infty,
$$
and $B^{-s, r/(r-1)}(\p\O)$ is the dual of the Besov space
$B^{s, r}(\p\O)$.
Denote by $\mathrm{Div}$
 the divergence operator on $\p\O$, the definition
 of which can be found in
\cite[p.143]{MMP}.

Now we state the $H^r(\curl,\O)$ estimate for system \eqref{1.10}.

\begin{Thm}\label{thm1.4}
Let $\O$ be a bounded convex domain
in $\mathbb R^3$.
Assume that the coefficient matrix
$A(x)$ is symmetric, bounded measurable,
uniformly elliptic and in $\mathrm{VMO}(\O).$
Let $1<r<\infty.$ Suppose that $\mathbf F\in L^r(\O)$,
$\mathbf f\in L^r(\O)$,
$\mathbf g\in L^r(\p\O)$ with $\nu\cdot\mathbf g=0$ and
$\mathrm{Div}\,\mathbf g\in B^{-1/r,r}(\p\O),$
then
there exists a unique solution $\u\in H^r(\curl,\O)$
of system \eqref{1.10}, and the solution $\u$ satisfies the estimate
\begin{equation}\label{1.5}
\aligned
&\|\mathbf u\|_{L^r(\O)}+\|\curl\mathbf u\|_{L^r(\O)}\\
\leq &C \left(\|\mathbf F\|_{L^r(\O)}+\|\mathbf f\|_{L^r(\O)}+
\|\mathrm{Div}\,\mathbf g\|_{B^{-1/r,r}(\p\O)}+\|\mathbf g\|_{L^{r}(\p\O)}\right),
\endaligned
\end{equation}
where the constant $C$ depends on $r$ and
the Lipschitz character of $\O.$ Moreover,
assume further that  $\dv\mathbf F\in L^r(\O)$, then
$\u\in L_{1/r}^r(\O)$ if $2\leq r<\infty$
and $\u\in B_{1/r}^{r, 2}(\O)$ if $1<r<2.$

\end{Thm}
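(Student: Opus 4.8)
The plan is to combine the $L^2$ theory of the system, local reverse H\"older estimates obtained by freezing the $\mathrm{VMO}$ coefficient and invoking Theorems \ref{thm1.1} and \ref{thm1.2} for the resulting constant-coefficient curl-curl system, and Shen's real-variable method as used in \cite{Geng}. \emph{Reduction and the $L^2$ estimate.} First I would remove the inhomogeneous boundary datum: since $\nu\cdot\mathbf g=0$ and $\mathrm{Div}\,\mathbf g\in B^{-1/r,r}(\pO)$, one constructs a lifting $\u_0\in H^r(\curl,\O)$ with $\nu\times\u_0=\mathbf g$ on $\pO$ and
$$
\|\u_0\|_{L^r(\O)}+\|\curl\u_0\|_{L^r(\O)}\le C\big(\|\mathrm{Div}\,\mathbf g\|_{B^{-1/r,r}(\pO)}+\|\mathbf g\|_{L^r(\pO)}\big),
$$
so that $\v:=\u-\u_0$ solves $\curl(A\curl\v)+\v=\widetilde{\mathbf F}+\curl\widetilde{\mathbf f}$ in $\O$, $\nu\times\v=0$ on $\pO$, with $\widetilde{\mathbf F},\widetilde{\mathbf f}\in L^r(\O)$ of controlled norm. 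When $r=2$ the bilinear form $(\v,\mathbf w)\mapsto\int_\O A\curl\v\cdot\curl\mathbf w+\int_\O\v\cdot\mathbf w$ on $\{\mathbf w\in L^2(\O):\curl\mathbf w\in L^2(\O),\ \nu\times\mathbf w=0\}$ is bounded and, by the uniform ellipticity of $A$, coercive, so Lax--Milgram yields a unique weak solution together with the bound \eqref{1.5} in the case $r=2$.

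\emph{Local reverse H\"older inequalities and extrapolation.} To pass to $r\neq2$, the key is to show that whenever $\v$ solves $\curl(A\curl\v)+\v=0$ in $B_{2R}\cap\O$ with $\nu\times\v=0$ on $B_{2R}\cap\pO$ (the purely interior case being similar), there is an exponent $p>2$ with
$$
\Big(\frac{1}{|B_R\cap\O|}\int_{B_R\cap\O}\big(|\v|+|\curl\v|\big)^p\Big)^{\frac1p}\le C\Big(\frac{1}{|B_{2R}\cap\O|}\int_{B_{2R}\cap\O}\big(|\v|+|\curl\v|\big)^2\Big)^{\frac12}.
$$
This would follow by freezing $\bar A:=|B_{2R}\cap\O|^{-1}\int_{B_{2R}\cap\O}A$, solving the constant-coefficient problem $\curl(\bar A\curl\mathbf w)+\mathbf w=0$ in $B_{2R}\cap\O$ with $\nu\times\mathbf w=\nu\times\v$, and comparing: $\v-\mathbf w$ satisfies $\curl(\bar A\curl(\v-\mathbf w))+(\v-\mathbf w)=\curl\big((\bar A-A)\curl\v\big)$, whose energy is small because $A\in\mathrm{VMO}(\O)$ makes $\bar A-A$ small in $L^q$ for large $q$ while $\curl\v$ is a priori slightly better than $L^2$-integrable; and $\curl\mathbf w$ has higher local integrability in the convex domain $\O$ — in fact an $L^\infty$-type bound — which is exactly the regularity underlying Theorems \ref{thm1.1} and \ref{thm1.2} (the Cianchi--Maz'ya technique applied to the frozen curl-curl system). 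Since $A\in\mathrm{VMO}$, the exponent $p$ may be taken arbitrarily large. Inserting this reverse H\"older inequality and the $L^2$ bound into Shen's real-variable lemma (as in \cite{Geng}) would give \eqref{1.5} for every $2\le r<\infty$; for $1<r<2$ one dualizes against the adjoint system, which has the same form because $A$ is symmetric.

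\emph{Existence, uniqueness and the additional regularity.} Uniqueness for all $1<r<\infty$ is immediate from \eqref{1.5}. For $r\ge2$ the data lie in the $r=2$ class, so the Lax--Milgram solution exists and, by \eqref{1.5}, belongs to $H^r(\curl,\O)$; for $1<r<2$ I would approximate $(\mathbf F,\mathbf f,\mathbf g)$ in the relevant norms by smooth data with $\nu\cdot\mathbf g_k=0$, solve the approximate problems, and use \eqref{1.5} applied to differences to see that the solutions converge in $H^r(\curl,\O)$ to a solution of \eqref{1.10} (equivalently, a duality argument against the self-adjoint problem with exponent $r'>2$). Finally, taking the distributional divergence of \eqref{1.10} and using $\dv\curl=0$ gives $\dv\u=\dv\mathbf F$; hence if $\dv\mathbf F\in L^r(\O)$ then $\u\in L^r(\O)$, $\dv\u\in L^r(\O)$, $\curl\u\in L^r(\O)$ and $\nu\times\u=\mathbf g\in L^r(\pO)$, so Theorem \ref{thm1.2} yields $\u\in L_{1/r}^r(\O)$ when $2\le r<\infty$ and $\u\in B_{1/r}^{r,2}(\O)$ when $1<r<2$.

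The step I expect to be the main obstacle is the boundary reverse H\"older estimate: establishing uniform higher integrability — really the $L^\infty$ bound — for $\curl\mathbf w$ of the frozen constant-coefficient curl-curl system on $B_R\cap\O$ near a convex boundary under the tangential condition, and controlling the comparison term with only $\mathrm{VMO}$ smallness of $A$. It is precisely there that the convexity of $\O$ and the mechanism behind Theorems \ref{thm1.1}--\ref{thm1.2} must be used quantitatively at every scale.
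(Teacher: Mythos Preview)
Your overall architecture—lift the boundary datum, obtain the $r=2$ case by Lax--Milgram, prove a boundary weak reverse H\"older inequality and feed it into Shen's extrapolation for $r>2$, dualize for $1<r<2$, and deduce the last statement from Theorem~\ref{thm1.2}—coincides with the paper's. The substantive divergence, and the place where your proposal has a gap, is the reverse H\"older step.

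The paper does \emph{not} freeze coefficients. Its Lemma~\ref{lem4.2} observes that once the right-hand side has been localized away (so that $\curl(A\curl\H)=0$ on $B(Q,4s)$), one can write $A(x)\curl\H=\nabla\phi$ on that ball. Because $\dv\curl\H=0$ and, on $B(Q,4s)\cap\pO$, $\nu\cdot\curl\H=-\mathrm{Div}(\nu\times\H)=0$, the scalar $\phi$ solves $\dv(A^{-1}\nabla\phi)=0$ in $B(Q,4s)\cap\O$ with homogeneous conormal data on $B(Q,4s)\cap\pO$. Geng's weak reverse H\"older inequality for this scalar Neumann problem in convex domains—valid for every exponent, with the $\mathrm{VMO}$ hypothesis consumed there—then yields \eqref{5.1} directly via $|\curl\H|\sim|\nabla\phi|$. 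The vector reverse H\"older is thereby reduced to an already established scalar one; no constant-coefficient curl-curl estimate is needed.

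Your freezing route instead requires an $L^p$ (indeed $L^\infty$-type) bound for $\curl\mathbf w$, where $\mathbf w$ solves $\curl(\bar A\curl\mathbf w)+\mathbf w=0$ on $B_{2R}\cap\O$ with $\nu\times\mathbf w=\nu\times\v$. The references you invoke do not supply this. Theorems~\ref{thm1.1}--\ref{thm1.2} estimate a vector field in terms of its curl and divergence, not the curl of a solution. The Cianchi--Maz'ya bound in the paper (Lemma~\ref{lem3.2}, Corollary~\ref{cor2.4}) is established only for the identity-coefficient system $\curl\curl\w=\curl\u$ with homogeneous tangential data on all of $\pO$; its proof rests on Lemma~\ref{lem3.1}, whose level-set identity is tied to $|\H|$, $\curl\H$, $\dv\H$ and has no evident analogue when a nontrivial constant matrix $\bar A$ sits inside the curl. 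For $\H=\curl\mathbf w$ one has $\dv\H=0$ but only $\curl(\bar A\H)=-\mathbf w$, not control of $\curl\H$; for $\bar A\H$ one loses both the divergence-free condition and the boundary condition. So the mechanism you point to does not close, and you correctly flag this step as the obstacle. The paper's scalar reduction $A\curl\H=\nabla\phi$ is precisely the idea that removes it.
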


\begin{Rem} Using the proof of Theorem \ref{thm1.4},
if the domain $\O$ is  Lipschitz
and  $r\in (3/2-\epsilon, 3+\epsilon)$
for some positive constant $\epsilon$ depending on the Lipschitz character of $\O$,
we can also obtain the inequality \eqref{1.5}.
This can be viewed as an improvement of the $s=0$ setting of
Kar and Sini's $H^{s, r}_0 (curl; \O)$ estimate in \cite{KS},
see Theorem \ref{thm3.2} for details.
\end{Rem}

The organization of this paper is as follows. In Section 2,
we first establish the $L^{\infty}$ estimates for
 vector fields with the
normal component or the tangential
components vanishing on the boundary.
Then we will give the  proofs of
Theorem \ref{thm1.1} and Theorem \ref{thm1.2}.
In Section 3,
applying Theorem \ref{thm1.1} and Theorem \ref{thm1.2},
we  prove Theorem \ref{thm1.4}.
At last, we show the well-posedness of the
Maxwell-type system in
Lipschitz domains.

 Throughout the paper,
 the bold typeface is used to indicate vector
quantities; normal typeface will be used for
vector components and for scalars.

\section{Proofs of Theorem \ref{thm1.1} and Theorem \ref{thm1.2}}

Consider the system
\begin{equation}\label{2.1}
\curl\curl\w=\curl\u,\q \dv\w=0 \q  \text{in }\O,\q \nu\times\w=0\q \text{on }\p\O
\end{equation}
and the system
\begin{equation}\label{2.2}
\aligned
\curl\curl\hat{\w}=\curl\u\q &\text{and}\q\dv\hat{\w}=0 \q
&\text{in }\O,\q\\
 \nu\cdot\hat{\w}=0\q
&\text{and}\q\nu\times\curl\hat{\w}=\nu\times\u\q &\text{on }\p\O.~
\endaligned
\end{equation}
To define the respective weak solutions of systems \eqref{2.1} and \eqref{2.2},
we introduce two spaces (\cite{KY1}):
$$
\aligned X_{\sigma}^r\equiv&\left\{\u\in H^{r}(\curl, \O)~:~\dv\u=0\q\text{in }\O,\q
\nu\cdot\u=0\q\text{on }\p\O\right\},\\
 V_{\sigma}^r\equiv&\left\{\u\in H^{r}(\curl, \O)~:~\dv\u=0\q\text{in }\O,
 \q\nu\times\u=0\q\text{on }\p\O\right\},
\endaligned
$$
where $1<r<\infty.$

\begin{Def}
We say $\w$ is a weak  solution to system \eqref{2.1}
if $\w\in V_{\sigma}^r$
and
$$
\int_{\O} \curl\w\cdot\curl\mathbf\Phi dx=\int_{\O} \u\cdot\curl\mathbf\Phi dx
$$
for any $\mathbf\Phi\in  V_{\sigma}^{r/(r-1)}.$

We say $\hat{\w}$ is a weak  solution to system \eqref{2.2}
if $\hat{\w}\in X_{\sigma}^r$
and
$$
\int_{\O} \curl\hat{\w}\cdot\curl\mathbf\Phi dx=\int_{\O} \u\cdot\curl\mathbf\Phi dx
$$
for any $\mathbf\Phi\in X_{\sigma}^{r/(r-1)}.$
\end{Def}

As stated in the introduction, to prove Theorem \ref{thm1.1}
by applying the complex
interpolation,  the key step is to
establish the $L^{\infty}$ estimate
 for the curl of solutions to the
 curl-type system \eqref{2.1}.
 We need to mention that the proof of $L^{\infty}$ estimate
 is inspired by Cianchi
and Maz'ya in \cite{CM, CM1}
where the divergence-type elliptic systems
with Uhlenbeck type structure were treated.

 We first establish
 an inequality for vector fields
 with the normal component or the tangential component
 vanishing in convex domains.
 A similar result can be found in
 \cite[Lemma 2.2]{Geng-Shen}.

\begin{Lem}\label{lem3.1}
Let $\O$ be a bounded convex domain in $\mathbb R^3$
with smooth boundary. Let
$\H\in C^2({\O})\bigcap C^1(\bar{\O})$
satisfying $\nu\cdot\H=0$ or $\nu\times\H=0$ on $\p\O$.
Then
\begin{equation}\label{3.3}
\aligned
\int_{\{|\H|=t\}}
t|\nabla|\H|| dS
\leq &\int_{\{|\H|=t\}}t
\left(|\curl\H|+|\dv\H|\right)dS\\
&+\int_{\{|\H|>t\}} \left(|\curl\H|^2+|\dv\H|^2\right) dx.
\endaligned
\end{equation}
\end{Lem}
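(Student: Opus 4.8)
The plan is to prove the inequality by a standard integration-by-parts argument adapted to the two boundary conditions, using the convexity of $\O$ exactly at the boundary term. First I would start from the pointwise identity that relates $|\d|\H||$ to the full gradient: on the set $\{|\H|=t\}$ where $|\H|$ is smooth (for a.e.\ $t$, by Sard's theorem applied to $|\H|^2$), one has $|\d|\H|| \le |\d\H|$ in the appropriate sense, but more usefully, on a level set $\{|\H|=t\}$ with outer normal $\frac{\d|\H|}{|\d|\H||}$, the quantity $t\,|\d|\H||$ is the natural flux density. The key algebraic input is the Bochner-type / Gaffney identity
\[
|\d\H|^2 = |\curl\H|^2 + |\dv\H|^2 + \dv\big((\H\cdot\d)\H - (\dv\H)\H\big),
\]
valid pointwise for $\H\in C^2$. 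Equivalently, written for the superlevel set $\{|\H|>t\}$, integrating this identity over $\{|\H|>t\}$ and applying the divergence theorem produces a boundary integral over $\{|\H|=t\}\cup(\p\O\cap\{|\H|>t\})$.

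Next I would handle the two pieces of the boundary $\p\{|\H|>t\}$ separately. On the interior level set $\{|\H|=t\}$ the outer normal (with respect to $\{|\H|>t\}$) is $-\d|\H|/|\d|\H||$, and the flux of $(\H\cdot\d)\H-(\dv\H)\H$ there is controlled, after using $|\d|\H||\le|\d\H|$ and Cauchy–Schwarz, by terms of the form $t\,|\d\H|$ and $t(|\curl\H|+|\dv\H|)$; combined with the $|\curl\H|^2+|\dv\H|^2$ bulk term and Young's inequality this is designed to yield precisely the right-hand side of \eqref{3.3}. On the portion of $\p\O$ inside $\{|\H|>t\}$, the crucial claim is that the corresponding boundary integrand is $\le 0$: this is where convexity enters. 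When $\nu\times\H=0$ on $\p\O$ (so $\H=(\nu\cdot\H)\nu$ is normal) or $\nu\cdot\H=0$ (so $\H$ is tangential), the quadratic form $\nu\cdot\big((\H\cdot\d)\H-(\dv\H)\H\big)$ restricted to $\p\O$ reduces, after using the boundary condition and the structure equations, to an expression involving the second fundamental form $\mathrm{II}$ of $\p\O$ acting on $\H$; for a convex domain $\mathrm{II}\ge 0$ (with the sign convention making it $\le 0$ in the inequality after accounting for the outward orientation), so that term drops out with a favorable sign. This is exactly the mechanism in \cite[Lemma 2.2]{Geng-Shen} and in the Cianchi–Maz'ya computations, and it is the step I expect to be the main obstacle: one must compute the tangential derivatives carefully, split $\d\H$ into tangential and normal parts along $\p\O$, and verify that the boundary condition ($\nu\times\H=0$ or $\nu\cdot\H=0$) combined with $\H\in C^1(\bar\O)$ kills all the "bad" terms, leaving only the nonnegative curvature term.

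Finally I would assemble the pieces: for a.e.\ $t>0$, the divergence-theorem identity on $\{|\H|>t\}$ gives
\[
\int_{\{|\H|>t\}}\!\!\big(|\curl\H|^2+|\dv\H|^2\big)\,dx
= \int_{\{|\H|=t\}}\!\!\Big(|\d\H|^2 \!-\! \nu\cdot\big((\H\cdot\d)\H-(\dv\H)\H\big)\Big)dS
\;+\;(\text{boundary term on }\p\O),
\]
wait — more precisely one integrates $|\d\H|^2$ over the superlevel set and moves the divergence term to the boundary; rearranging and using the nonpositivity of the $\p\O$-contribution together with $|\d|\H||\,|\H| \le |\d\H|\,|\H|$ and the pointwise bound on the level-set flux yields \eqref{3.3} after absorbing a $t|\d\H|$ term via Young's inequality into the bulk quadratic term and the $t(|\curl\H|+|\dv\H|)$ term. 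The smoothness assumption $\H\in C^2(\O)\cap C^1(\bar\O)$ and the smoothness of $\p\O$ are used to justify all integrations by parts and the a.e.-$t$ regularity of the level sets; the convex case with merely Lipschitz boundary would then follow later by approximation, but that is not needed for this lemma.
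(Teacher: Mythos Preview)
Your approach is essentially the paper's: both start from the Gaffney-type identity (the paper writes it as $\dv(\d|\H|\,|\H|)+\dv(\curl\H\times\H)=\d\dv\H\cdot\H+|\d\H|^2-|\curl\H|^2$, which is your identity after using $(\H\cdot\d)\H=|\H|\d|\H|-\H\times\curl\H$), integrate over $\{|\H|>t\}$, invoke convexity via the second fundamental form (the paper cites Grisvard \cite[p.~135--137]{GP}) to discard the $\p\O$ contribution, and use $\nu=-\d|\H|/|\d|\H||$ on the interior level set together with Sard's theorem. One correction to your assembly: no Young's inequality is needed and no ``$t|\d\H|$'' term ever appears --- after the divergence theorem the bulk integral $\int_{\{|\H|>t\}}|\d\H|^2$ sits on the \emph{same} side as $\int_{\{|\H|=t\}}t|\d|\H||$ with a $+$ sign and is simply dropped, while the remaining level-set flux $\nu\cdot(\curl\H\times\H-(\dv\H)\H)$ is bounded pointwise by $t(|\curl\H|+|\dv\H|)$, giving \eqref{3.3} directly.
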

\begin{proof}
We first note that
$$
\dv(\nabla |\H| |\H|)+\dv(\curl\H\times\H)
=\nabla \dv\H\cdot\H+|\nabla \H|^2
-|\curl\H|^2.
$$
Then by Green's formula, we have
$$
\aligned
&\int_{\{|\H|>t\}}\left(\dv(\nabla |\H| |\H|)+\dv(\curl\H\times\H)\right) dx\\
=&\int_{\{|\H|=t\}\bigcap\p\O}\sum_{i,j=1}^3 \nu_i H_j \p_j H_i dS
+\int_{\{|\H|=t\}\backslash\p\O}\left(\curl \H\times \H +\nabla|\H||\H|\right)\cdot \nu(x)dS
\endaligned
$$
and
$$
\aligned
&\int_{\{|\H|>t\}} \left(\nabla \dv\H\cdot\H+|\nabla \H|^2
-|\curl\H|^2\right) dx\\
=&\int_{\p\{|\H|>t\}} \nu\cdot\H\dv\H dS
+\int_{\{|\H|>t\}} \left(|\nabla \H|^2
-|\curl\H|^2-|\dv\H|^2\right) dx.
\endaligned
$$
Therefore,
$$
\aligned
&\int_{\{|\H|=t\}\bigcap\p\O}\left(\sum_{i,j=1}^3 \nu_i H_j \p_j H_i -\nu\cdot\H\dv\H\right)dS\\
=&\int_{\{|\H|=t\}\backslash\p\O}\left(\H\dv\H -\curl \H\times \H -\nabla|\H||\H|\right)\cdot \nu(x)
dS\\
&+\int_{\{|\H|>t\}} \left(|\nabla \H|^2
-|\curl\H|^2-|\dv\H|^2\right) dx.
\endaligned
$$
From \cite[p.135-137]{GP} and
by the condition $\nu\cdot\H=0$ or $\nu\times\H=0$ on $\p\O$, then it follows that
$$
\int_{\{|\H|=t\}\bigcap\p\O}\left(\sum_{i,j=1}^3 \nu_i H_j \p_j H_i -\nu\cdot\H\dv\H\right)dS\leq 0.
$$
This gives that
\begin{equation}\label{5.5}
\aligned
-\int_{\{|\H|=t\}}\nabla|\H||\H|\cdot \nu(x)dS
\leq &\int_{\{|\H|=t\}}t
\left(|\curl\H|+|\dv\H|\right)dS\\
&+\int_{\{|\H|>t\}} \left(|\curl\H|^2+|\dv\H|^2\right) dx.
\endaligned
\end{equation}
Note that, for $x\in \{|\H|=t\}\bigcap\{|\nabla |\H||\neq 0\}$ we have
$$
\nu(x)=-\frac{\nabla |\H|}{|\nabla |\H||}.
$$
From Sard's theorem, we know that
\begin{center}
the image  $|\H|(X)$ has Lebesgue measure $0$, where $X=\{|\nabla |\H||=0\}$.
\end{center}
Then, the inequality \eqref{3.3} follows since  \eqref{5.5}.
\end{proof}

To show the $L^{\infty}$
estimate for $\curl\w$  of
system \eqref{2.1}, it is necessary to introduce
the well-known Lorentz spaces.
Let  $f$ be
a measurable function defined on $\O$. We define the distribution function of $f$ as
$$f_{*}(s)=\mu(\{|f|>s\}),\q s>0,
$$
and the nonincreasing rearrangement of $f$ as
$$f^{*}(t)=\inf\{s>0, f_{*}(s)\leq t\},\q t>0.
$$
The Lorentz space is defined as
$$L^{m,q}(\O)=\left\{f: \O\to\mathbb{R} \text{ measurable},
\|f\|_{L^{m,q}(\O)}<\infty\right\} \q \text{with } 1\leq m<\infty
$$
equipped with the quasi-norm
$$\|f\|_{L^{m,q}(\O)}=\Big(\int_0^{\infty}\left(t^{1/m}f^{*}(t)\right)^q\frac{dt}{t}\Big)^{1/q},
\q 1\leq q<\infty,
$$
see for example \cite[p.223-p.228]{AD}  for a more precise definition.
Furthermore, the property that the Lebesgue space $L^{r}(\O)$ is  continuously
imbedded into  $L^{m,q}(\O)$ if $r>m$ will be used in the following proofs.

\begin{Lem}\label{lem3.2}
Let $\O$ be a bounded convex domain in $\mathbb R^3.$
Let $\u\in H^r(\curl, \O)$ with $r>3$
and let $\w$ be the weak solution of system \eqref{2.1}.
Then we have
\begin{equation}\label{3.6}
\|\curl\w\|_{L^{\infty}(\O)}\leq C \|\curl\u\|_{L^{3,1}(\O)},
\end{equation}
where the constant C depends on the Lipschitz character of the domain $\O$.
\end{Lem}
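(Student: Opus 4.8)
The plan is to put $\H:=\curl\w$ and to obtain \eqref{3.6} as a sup-bound produced by feeding the level-set inequality of Lemma \ref{lem3.1} into the rearrangement machinery of Cianchi and Maz'ya.

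First I would record the algebra of $\H$. Since $\w$ solves \eqref{2.1}, we have $\curl\H=\curl\curl\w=\curl\u$ and $\dv\H=\dv\curl\w=0$ in $\O$; moreover $\nu\times\w=0$ on $\p\O$ forces $\nu\cdot\H=0$ on $\p\O$, because for any scalar $\phi$ integration by parts gives both $\int_\O\curl\w\cdot\nabla\phi\,dx=\int_{\pO}(\nu\times\w)\cdot\nabla\phi\,dS=0$ and $\int_\O\curl\w\cdot\nabla\phi\,dx=\int_{\pO}(\nu\cdot\curl\w)\,\phi\,dS$. Thus $\H$ satisfies the hypothesis of Lemma \ref{lem3.1} in its ``$\nu\cdot\H=0$'' form, once we are in a smooth situation. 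To reach that situation I would approximate $\O$ by smooth bounded convex domains $\O_j$ whose Lipschitz characters are bounded uniformly in $j$, mollify $\u$, and invoke elliptic (Hodge-type) regularity for the curl--curl system \eqref{2.1} to get $\w\in C^2(\O_j)\cap C^1(\overline{\O_j})$, prove \eqref{3.6} on $\O_j$ with a constant depending only on the Lipschitz character, and then let $j\to\infty$ using stability of the weak solutions of \eqref{2.1} under perturbation of the domain and of the data.

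With $\H$ smooth, Lemma \ref{lem3.1} together with $\dv\H=0$ and $\curl\H=\curl\u$ gives, for a.e.\ $t>0$,
\[
\int_{\{|\H|=t\}}|\nabla|\H||\,dS\le\int_{\{|\H|=t\}}|\curl\u|\,dS+\frac1t\int_{\{|\H|>t\}}|\curl\u|^2\,dx .
\]
Writing $\mu(t):=|\{|\H|>t\}|$, the Cauchy--Schwarz inequality on the level set together with the Fleming--Rishel formula $-\mu'(t)=\int_{\{|\H|=t\}}|\nabla|\H||^{-1}\,dS$ yields $\int_{\{|\H|=t\}}|\nabla|\H||\,dS\ge \mathcal H^2(\{|\H|=t\})^2/(-\mu'(t))$, and the relative isoperimetric inequality in the convex domain $\O$ bounds $\mathcal H^2(\{|\H|=t\})$ from below by $c(\O)\,\mu(t)^{2/3}$. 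Inserting these and using the coarea formula and the Hardy--Littlewood rearrangement inequality (to control $\int_{\{|\H|>t\}}|\curl\u|^2\,dx$ and $\int_{\{|\H|=t\}}|\curl\u|\,dS$ in terms of $(\curl\u)^*$) converts the above into a closed differential inequality for the decreasing rearrangement of $|\H|$; integrating it from $0$ gives
\[
\|\H\|_{L^\infty(\O)}\le C\int_0^{|\O|}(\curl\u)^*(s)\,s^{-2/3}\,ds=C\,\|\curl\u\|_{L^{3,1}(\O)} ,
\]
the weight $s^{-2/3}=s^{1/3-1}$ being exactly the one defining $L^{3,1}$, and the hypothesis $r>3$ entering only through the embedding $L^r(\O)\hookrightarrow L^{3,1}(\O)$, which makes the right-hand side finite.

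The main obstacle is this last step: setting up and integrating the rearrangement differential inequality requires (i) a careful treatment of the portion of $\{|\H|=t\}$ touching $\p\O$, where the sign information coming from $\nu\cdot\H=0$ is precisely what keeps the boundary contribution favorable, and (ii) verifying that the relative isoperimetric constant of a convex domain --- hence the constant $C$ in \eqref{3.6} --- depends only on its Lipschitz character. A secondary technical point is the smooth approximation of the second step, namely the up-to-the-boundary regularity of $\w$ on smooth convex domains and the continuity of the weak solution map of \eqref{2.1} under the approximation.
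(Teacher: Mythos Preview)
Your proposal is correct and follows essentially the same route as the paper: set $\H=\curl\w$, apply Lemma \ref{lem3.1} in the smooth setting, feed the resulting level-set inequality into the Cianchi--Maz'ya rearrangement machinery to obtain the $L^{3,1}$ bound, and then remove the smoothness hypotheses by approximation. The only organizational differences are that the paper separates the approximation into two steps (first smoothing $\u$, then $\O$) and, in the rearrangement step, integrates from $t_0:=|\H|^*(|\O|/2)$ rather than from $0$---the relative isoperimetric inequality in the required form holds only for level sets of measure at most $|\O|/2$---with $t_0$ bounded separately by $C\|\curl\u\|_{L^{3,1}(\O)}$.
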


\begin{proof}
We divide the proof into three steps.

Step 1. We prove \eqref{3.6} under the following assumptions:

(i) the vector $\u\in C^3({\O})\bigcap C^2(\bar{\O})$;

(ii) the domain $\O$ is smooth.

Let $\H=\curl\w.$ From Lemma \ref{lem3.1}, we now have
\begin{equation}\label{3.7}
\int_{\{|\H|=t\}}
t|\nabla|\H|| dS
\leq \int_{\{|\H|=t\}}t
|\curl\u| dS\\
+\int_{\{|\H|>t\}} |\curl\u|^2 dx.
\end{equation}
We need to mention that
the inequality \eqref{3.7} is quite
similar to the inequality (6.16)
in \cite{CM}. Therefore,
to obtain the estimate \eqref{3.6} under the assumptions
(i) and (ii)
the proof in \cite{CM} is applicable.
For reader's convenience,
we give the outline of the proof in appendix.

Step 2. We remove the assumption (i).
We take a sequence $\u_k\in C^3(\bar{\O})$
such that $\u_k$ converges to $\u$ in $H^r(\curl,\O).$
Let $\w_k$ be the solution of system \eqref{2.1}
with $\curl\u$ replaced by $\curl\u_k.$ Then we have
$\w_k\in C^3(\bar{\O})$ and by \eqref{3.11} in appendix we have
\begin{equation}\label{3.12}
\|\curl\w_k\|_{L^{\infty}(\O)}\leq C(\O) \|\curl\u_k\|_{L^{3,1}(\O)}.
\end{equation}
From system \eqref{2.1}, we know that $\w_k\in V_{\sigma}^2$ and $\curl\w_k\in X_{\sigma}^2$.
Note that the spaces
$X_{\sigma}^2$ and $V_{\sigma}^2$  are both continuously
imbedded into  $H^1(\O)$ in convex domains
(see \cite[Theorem 2.17]{ABDG}), then
we can deduce that
$$
\|\w_k\|_{H^{1}(\O)}+\|\curl\w_k\|_{H^{1}(\O)}\leq C(\O) \|\curl\u_k\|_{L^{2}(\O)}.
$$
Then there exists a vector $\w\in H^1(\O)$ such that  $\w$ is the weak
solution of system \eqref{2.1}. Moreover, there exists
a subsequence of $\{\w_k\}_{k=1}^{\infty}$, still denoted by $\{\w_k\}_{k=1}^{\infty}$,  such that
$$
\curl\w_{k}\rightarrow \curl\w \q \text{in }L^2(\O)
$$
and
$$
\curl\w_{k}\rightarrow \curl\w \q \text{almost everywhere on } \O.
$$
From \eqref{3.12}, the solution $\w$
satisfies the estimate \eqref{3.6}.

Step 3. We remove the assumption (ii).  We look
for a sequence $\{\O_m\}_{m\in \mathbb{N}}$
of bounded domains $\O_m \subset\O$ such that
$\O_m\in C^{\infty},  \O_m\to\O$ as $m\to\infty$
with respect to the Necas-Verchota's approximation,
see \cite{Ne, VE}. Let $\w_m$ be the solution of system \eqref{2.1}
with the domain $\O$ replaced by $\O_m.$
Then by \eqref{3.11} in appendix we have
\begin{equation}\label{3.13}
\|\curl\w_m\|_{L^{\infty}(\O_m)}\leq C\|\curl\u\|_{L^{3,1}(\O)}.
\end{equation}
where the constant $C$ depends on the Lipschitz character of $\O_m,$
and hence depends on the Lipschitz character of $\O.$

From system \eqref{2.1},  we can also conclude that
$\w_m\in V_{\sigma}^2$ and $\curl\w_m\in X_{\sigma}^2$.
Then by Theorem 2.17 in \cite{ABDG} again, we have
\begin{equation}\label{3.14}
\|\w_m\|_{H^{1}(\O_m)}+\|\curl\w_m\|_{H^{1}(\O_m)}\leq C(\O) \|\curl\u\|_{L^{2}(\O)}.
\end{equation}
Let $\tilde{\w}_m$ be the extension of $\w_m$
 such that $\tilde{\w}_m$ is 0 outside of $\O_m$.
 Then we obtain that $\tilde{\w}_m$ converges to $\w$ weakly in $L^2(\O)$
 and $\curl\tilde{\w}_m$ converges to $\curl\w$ weakly in $L^2(\O),$
 where $\w\in H^1(\O)$ is the weak solution of system \eqref{2.1}.
 From \eqref{3.14}, for any compact subset $K$ of $\O$ we have
$$
\curl\w_m\rightarrow \curl\w \q \text{almost everywhere on any compact set } K.
$$
By \eqref{3.13}, the solution $\w$
satisfies the estimate \eqref{3.6}. We finish our proof. \end{proof}

By Theorem 2.2 and Theorem 2.5 in \cite{CM},
then from Lemma \ref{lem3.2} and the
Helmholtz-Weyl decomposition \eqref{1.4}, we immediately get

\begin{Cor}\label{cor2.4}
Let $\O$ be a bounded convex domain
in $\mathbb R^3$.
Let $\u\in L^{3,1}(\O),$ $\dv\u\in L^{3,1}(\O)$ and
$\curl\u\in L^{3,1}(\O)$. Assume further that $\nu\times\u=0$ or $\nu\cdot\u=0$ on $\p\O,$
then $\u\in L^{\infty}(\O)$ and we have the estimate
$$
\|\u\|_{L^{\infty}(\O)}\leq C\left(\|\dv\u\|_{L^{3,1}(\O)}+\|\curl\u\|_{L^{3,1}(\O)}
\right),
$$
where the constant $C$ depends only on
the Lipschitz character of $\O.$
\end{Cor}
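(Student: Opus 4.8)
The plan is to deduce Corollary \ref{cor2.4} from the Helmholtz–Weyl decomposition \eqref{1.4} by estimating the two summands $\nabla p_{\u}$ and $\curl\w_{\u}$ separately in $L^{\infty}(\O)$, exactly as indicated in the statement. First I would apply \eqref{1.4} to write $\u=\nabla p_{\u}+\curl\w_{\u}$, where $p_{\u}$ solves a Laplace–Neumann problem with data $\dv\u$ and suitable boundary term, and $\w_{\u}$ satisfies a curl–curl system of the type \eqref{2.1}. Under either boundary hypothesis $\nu\times\u=0$ or $\nu\cdot\u=0$ on $\p\O$, the decomposition can be arranged so that the corresponding boundary condition is inherited by the two pieces; in particular the curl part $\curl\w_{\u}$ is governed by \eqref{2.1} with $\curl\u$ as the source, so Lemma \ref{lem3.2} applies and yields
\[
\|\curl\w_{\u}\|_{L^{\infty}(\O)}\le C\|\curl\u\|_{L^{3,1}(\O)}.
\]

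Next I would handle the gradient part $\nabla p_{\u}$. Here $p_{\u}$ satisfies $-\Delta p_{\u}=-\dv\u$ in $\O$ with a Neumann-type boundary condition whose data is controlled by the prescribed boundary behaviour of $\u$ (in the two cases considered it reduces to homogeneous data, since the traces of $\u$ vanish). Invoking the gradient regularity for the Laplace–Neumann problem in convex domains — this is precisely the kind of $L^{\infty}$ gradient bound obtained via the Cianchi–Maz'ya technique, and is exactly the content of Theorem 2.2 and Theorem 2.5 of \cite{CM} cited just before the corollary — gives
\[
\|\nabla p_{\u}\|_{L^{\infty}(\O)}\le C\|\dv\u\|_{L^{3,1}(\O)}.
\]
Adding the two bounds produces $\|\u\|_{L^{\infty}(\O)}\le C(\|\dv\u\|_{L^{3,1}(\O)}+\|\curl\u\|_{L^{3,1}(\O)})$, and that the constant depends only on the Lipschitz character of $\O$ follows because both Lemma \ref{lem3.2} and the Laplace–Neumann estimate have this dependence. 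Note the hypothesis $\u\in L^{3,1}(\O)$ together with $\dv\u,\curl\u\in L^{3,1}(\O)$ guarantees that the decomposition \eqref{1.4} is valid in this function-space setting and that $p_{\u},\w_{\u}$ have enough regularity for the cited estimates to be invoked (for instance by first approximating $\u$ by fields in $H^r(\curl,\O)$ with $r>3$, as in the proof of Lemma \ref{lem3.2}, using $L^r\hookrightarrow L^{3,1}$ for $r>3$).

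The main obstacle I anticipate is bookkeeping around the Helmholtz–Weyl decomposition: one must check that the two boundary conditions $\nu\times\u=0$ and $\nu\cdot\u=0$ really do pass to the pieces $\nabla p_{\u}$ and $\curl\w_{\u}$ in the form required by Lemma \ref{lem3.2} and by the Laplace–Neumann estimate, respectively, and that the decomposition is unique/stable in the Lorentz-space scale so that the norm of each piece is bounded by the data. Concretely, in the case $\nu\cdot\u=0$ one wants $\partial p_{\u}/\partial\nu=0$ on $\p\O$ and $\nu\cdot\curl\w_{\u}$ under control, while in the case $\nu\times\u=0$ one instead arranges $\nu\times\curl\w_{\u}$ in the right space; matching these against the precise form of system \eqref{2.1} (versus \eqref{2.2}) is the delicate point. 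Once the decomposition is pinned down, the rest is a direct citation of Lemma \ref{lem3.2} and of \cite[Theorems 2.2 and 2.5]{CM}, so the argument is short.
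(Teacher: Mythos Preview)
Your proposal is correct and follows essentially the same route as the paper: the paper's proof is the single sentence preceding the corollary, which invokes the Helmholtz--Weyl decomposition \eqref{1.4}, applies Lemma~\ref{lem3.2} to bound the curl part, and cites \cite[Theorems~2.2 and 2.5]{CM} for the $L^{\infty}$ gradient bound on the Laplace part. Your additional remarks on approximation and on matching the boundary conditions $\nu\times\u=0$ versus $\nu\cdot\u=0$ to the appropriate variants of systems \eqref{2.1}/\eqref{2.2} and the Dirichlet/Neumann problems are exactly the bookkeeping that the paper leaves implicit.
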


Next, we prove the $L_{1/r}^r(\O)$ estimate for $\curl\w$ of system \eqref{2.1}.

\begin{Lem}
Let $\O$ be a bounded convex domain in $\mathbb R^3.$
Let $\u\in H^r(\curl, \O)$ with $r>2$
and let $\w$ be the weak solution of system \eqref{2.1}.
Then we have
\begin{equation}\label{3.15}
\|\curl\w\|_{L^{r}_{1/r}(\O)}\leq C \|\curl\u\|_{L^{r}(\O)},
\end{equation}
where the constant C depends on $r$
and the Lipschitz character of the domain $\O$.
\end{Lem}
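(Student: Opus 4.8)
The plan is to deduce \eqref{3.15} from the $L^\infty$ bound of Lemma \ref{lem3.2} together with an $L^2$-based bound, by complex interpolation of the solution operator $\curl\u\mapsto\curl\w$ of \eqref{2.1}. Two preliminary remarks: this operator is linear (the weak formulation is linear in the datum and $\curl$ is linear), and $\curl\w$ depends on $\u$ only through $\curl\u$, since integrating by parts in the weak formulation gives $\int_\O\u\cdot\curl\mathbf\Phi\,dx=\int_\O\curl\u\cdot\mathbf\Phi\,dx$ for every test field $\mathbf\Phi\in V_{\sigma}^{r/(r-1)}$, the boundary integral $\int_{\pO}(\u\times\nu)\cdot\mathbf\Phi\,dS$ vanishing because $\nu\times\mathbf\Phi=0$ on $\pO$. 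Hence it suffices to estimate $\curl\w$ by the datum $\curl\u$ alone.

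For the $L^2$ endpoint: as recorded in the proof of Lemma \ref{lem3.2}, $\w\in V_\sigma^2$ and $\curl\w\in X_\sigma^2$, and since $\curl\curl\w=\curl\u\in L^2(\O)$, Theorem 2.17 in \cite{ABDG} (valid in convex domains) yields $\|\curl\w\|_{H^1(\O)}\leq C\|\curl\u\|_{L^2(\O)}$; as $H^1(\O)=L^2_1(\O)\hookrightarrow L^2_{1/2}(\O)$, the solution operator maps $L^2(\O)$ boundedly into $L^2_{1/2}(\O)$. For the $L^\infty$ endpoint: fix any $q>3$; since $\O$ is bounded, $L^q(\O)\hookrightarrow L^{3,1}(\O)$, so Lemma \ref{lem3.2} gives $\|\curl\w\|_{L^\infty(\O)}\leq C\|\curl\u\|_{L^{3,1}(\O)}\leq C\|\curl\u\|_{L^q(\O)}$, i.e.\ the operator maps $L^q(\O)$ boundedly into $L^\infty(\O)$.

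Now I would interpolate. Given $2<r<\infty$, put $\theta=1-\tfrac2r\in(0,1)$. On the target side, $\big[L^2_{1/2}(\O),\,L^\infty(\O)\big]_\theta\hookrightarrow L^r_{1/r}(\O)$: the integrability index is $\big(\tfrac{1-\theta}{2}\big)^{-1}=r$ and the smoothness index is $(1-\theta)\cdot\tfrac12=\tfrac1r$, with the $L^\infty$ endpoint handled as in Jerison--Kenig's proof of Theorem 5.15 in \cite{JKD} (e.g.\ via $L^\infty\hookrightarrow\mathrm{bmo}$). On the source side, $\big[L^2(\O),\,L^q(\O)\big]_\theta=L^{p_0}(\O)$ with $\tfrac1{p_0}=\tfrac{1-\theta}{2}+\tfrac{\theta}{q}=\tfrac1r+\tfrac{1-2/r}{q}>\tfrac1r$, so $p_0<r$. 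Thus the operator maps $L^{p_0}(\O)$ boundedly into $L^r_{1/r}(\O)$, and since $\O$ is bounded and $p_0<r$ we have $L^r(\O)\hookrightarrow L^{p_0}(\O)$; combining these,
$$
\|\curl\w\|_{L^r_{1/r}(\O)}\leq C\|\curl\u\|_{L^{p_0}(\O)}\leq C\|\curl\u\|_{L^r(\O)},
$$
which is \eqref{3.15}.

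I expect the interpolation step to be the main obstacle. One has to check that $\curl\u\mapsto\curl\w$ is a single bounded linear operator on the sum $L^2(\O)+L^q(\O)=L^2(\O)$ of the endpoint spaces (which follows from Lax--Milgram in $V_\sigma^2$ and uniqueness of weak solutions of \eqref{2.1}), that the two bounds above hold as genuine operator bounds — the $L^\infty$ bound for arbitrary, not necessarily curl-type, data in $L^q(\O)$ being recovered as in Lemma \ref{lem3.2} after replacing the datum by its divergence-free part (which leaves $\curl\w$ unchanged), using the $W^{1,q}$-regularity of the Dirichlet--Laplace problem in convex domains — and that the complex interpolation identities hold for the Bessel potential spaces $L^r_{1/r}(\O)$ and $L^2_{1/2}(\O)$, defined here as restriction spaces, on the convex (hence Lipschitz) domain $\O$; this last point relies on the existence of bounded extension operators for these scales on Lipschitz domains.
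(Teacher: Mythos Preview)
Your strategy is the paper's: interpolate $\curl\u\mapsto\curl\w$ between the $L^2\!\to\!H^1$ bound and the $L^\infty$ bound of Lemma~\ref{lem3.2}. Your preliminary remarks (linearity, dependence on $\curl\u$ only, well-posedness of the operator on $L^2+L^q$) are in order. The gap is in the target-side interpolation.

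You assert $[L^2_{1/2}(\O),L^\infty(\O)]_\theta\hookrightarrow L^r_{1/r}(\O)$ by formal index arithmetic, but the standard identity $[L^{p_0}_{s_0},L^{p_1}_{s_1}]_\theta=L^p_s$ requires $p_0,p_1<\infty$; an $L^\infty$ endpoint in the Bessel-potential scale is not covered, and your parenthetical ``handled as in Jerison--Kenig \dots\ via $L^\infty\hookrightarrow\mathrm{bmo}$'' still leaves you needing $[L^2_{1/2},\mathrm{bmo}]_\theta=L^r_{1/r}$, i.e.\ complex interpolation of Triebel--Lizorkin spaces at $p=\infty$. More to the point, what Jerison--Kenig actually do in Theorem~5.15(a)---and what the paper does here, following them---is not a direct interpolation of the solution operator at all. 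They form the analytic family $\mathscr{M}_z:\curl\u\mapsto\Lambda^z\mathscr{E}(\curl\w)$, with $\mathscr{E}$ Stein's extension operator and $\Lambda^z=\mathcal F^{-1}(1+|\xi|^2)^{z/2}\mathcal F$, bound $\mathscr{M}_{it}:L^\infty(\O)\to BMO(\mathbb R^3)$ via Lemma~\ref{lem3.2} and $\mathscr{M}_{1+it}:L^2(\O)\to L^2(\mathbb R^3)$ via the $H^1$ estimate, and then apply Stein interpolation of analytic families at $z=2/r$. Absorbing the fractional smoothing $\Lambda^z$ into the family is precisely the point: it reduces the target-side interpolation to the classical Fefferman--Stein interpolation between $BMO$ and $L^2$, from which $\curl\w\in L^r_{1/r}(\O)$ follows. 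Your citation of \cite{JKD} thus points at exactly the device your own argument is missing.
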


\begin{proof}
The proof is similar to that of Theorem 5.15(a) in \cite{JKD}.
Let $\mathscr{E}$ be Stein's extension operator mapping from functions
on  $\O$ to functions on  ${\mathbb R}^3$
(see \cite{ST}).
Denote by $\Lambda^z$
the fractional integral operator
$$
\Lambda^z f=\mathcal {F}^{-1}\left(\left(1+|\xi|^2\right)^{z/2}(\mathcal {F} f(\xi))\right).
$$
 Then we define the mapping
$$
\mathscr{M}_z:\q \curl\u\mapsto \Lambda^{z}\mathscr{E}\curl\w.
$$
From Lemma \ref{lem3.2}, for $\mathrm{Re} z=0$ the
mapping $\mathscr{M}$ maps $L^{3,1}(\O)$(and hence $L^{\infty}$)$\to BMO(\mathbb R^3).$
For $\mathrm{Re} z=1,$ it maps $L^2(\O)\to L^2(\mathbb R^3).$
Therefore, by the complex interpolation, when $z=2/r$
it maps $L^r(\O)\to L^r(\mathbb R^3),$ which proves that
if $\curl\u\in L^r(\O),$ then $\curl\w\in L^{r}_{1/r}(\O).$
This shows that \eqref{3.15} holds.
\end{proof}

We now begin to prove our main theorems.

\begin{proof}[Proof of Theorem \ref{thm1.1}]
Consider the following Laplace equation with Neumann boundary condition
\begin{equation}\label{2.33}
\Delta {p}=\dv\u \q  \text{in }\O,\q \frac{\p p}{\p\nu}=\nu\cdot\u\q \text{on }\p\O.
\end{equation}\label{2.31}
Let
\begin{equation}\label{2.30}
\tilde{p}=-\frac{1}{4\pi}\int_{\O} \frac{1}{|x-y|} \dv\u(y) dy.
\end{equation}
Then the function ${p}-\tilde{p}$ satisfies
$$
\Delta ({p}-\tilde{p})=0 \q  \text{in }\O,\q
\frac{\p({p}-\tilde{p})}{\p\nu}=\nu\cdot\u
-\frac{\p\tilde{p}}{\p\nu}\q \text{on }\p\O.
$$
The solvability of the solution ${p}-\tilde{p}$ to the above equation
can be found in \cite[Theorem 1.1]{Geng-Shen}, which implies the solvability
of problem \eqref{2.33}. Moreover,  Theorem 1.1 in \cite{Geng-Shen} gives the estimate
$$
\aligned
\|\nabla ({p}-\tilde{p})\|_{L_{1/r}^{r}(\O)}
&\leq
C(r,\O)\left(\left\|\nu\cdot\u\right\|_{L^{r}(\p\O)}+\left\|\frac{\p \tilde{p}}{\p\nu}\right\|_{L^{r}(\p\O)}\right)\\
&\leq C(r,\O) \left(\left\|\nu\cdot\u\right\|_{L^{r}(\p\O)}+\|\dv\u\|_{L^{r}(\O)}\right),
\endaligned
$$
where we have used the trace theorem and
the Calderon-Zygmund inequality in the last inequality.
Applying the Calderon-Zygmund inequality again for $\tilde{p}$, we have
\begin{equation}\label{3.17}
\|\nabla {p}\|_{L_{1/r}^{r}(\O)}
\leq C(r,\O) \left(\left\|\nu\cdot\u\right\|_{L^{r}(\p\O)}
+\|\dv\u\|_{L^{r}(\O)}\right)\q \text{for } 2<r<\infty.
\end{equation}

Now we let
 $$
 \tilde{\u}=\nabla {p}+\curl {\w},
 $$
 where $p$ is defined in \eqref{2.33} and
 $\w$ is the weak solution of system \eqref{2.1}.
Then we have
 $$
\dv (\tilde{\u}-\u)=0,\, \curl (\tilde{\u}-\u)=0\,\,\,\text{in }\O,\q
\nu\cdot(\tilde{\u}-\u)=0\,\,\,\text{on }\p\O,
$$
which  shows that $\tilde{\u}=\u$ in $\O$.
Therefore,  the inequality \eqref{1.2} holds true
since \eqref{3.17} and \eqref{3.15}.
We finish our proof.
\end{proof}

We are now in the position to show Theorem \ref{thm1.2}. In the proof, we shall use
the symbol $(h)^*$ to
denote the nontangential maximal function of $h$ in $\O$,  defined as
$$
(h)^*(x)=\sup\left\{|u(y)|,y \in\O, |x-y|<2 \mathrm{dist}(y,\p\O)\right\},\q x\in\p\O;
$$
we also introduce  the tangential derivative of a function
$\psi$ defined on $\p\O$ by $\nabla_{\mathrm{tan}}\psi$, we refer to
\cite[p.2518]{MMY}
for its definition, in particular,
if $\psi$ is a Lipschitz function
then $\nabla_{\mathrm{tan}}\psi=\nu\times\nabla\psi$ almost everywhere on $\p\O$.

\begin{proof}[Proof of Theorem \ref{thm1.2}]
Let $\hat{p}$ be the weak solution of Laplace equation
$$
\Delta \hat{p}=\dv\u \q  \text{in }\O,\q \hat{p}=0\q \text{on }\p\O,
$$
and let $\tilde{p}$ be defined as \eqref{2.30}.
The function $\hat{p}-\tilde{p}$ satisfies
$$
\Delta (\hat{p}-\tilde{p})=0 \q  \text{in }\O,\q
\hat{p}-\tilde{p}=-\tilde{p}\q \text{on }\p\O.
$$
For $1<r<\infty$,  we have
$$
\|(\nabla(\hat{p}-\tilde{p}))^*\|_{L^{r}(\p\O)}
\leq C(r,\O) \left(\|\tilde{p}\|_{L^{r}(\p\O)}
+\|\nabla_{\mathrm{tan}}\tilde{p}\|_{L^{r}(\p\O)}\right)
\leq C(r,\O) \|\tilde{p}\|_{W^{2,r}(\O)},
$$
where the first inequality follows from
Theorem 3.11 in \cite{MMY},
and the last inequality holds true since
the trace theorem. Then we have, by
 the Calderon-Zygmund inequality for $\tilde{p}$,
\begin{equation}\label{2.6}
\|\nabla \hat{p}\|_{L^{r}(\p\O)}
\leq C \|\dv\u\|_{L^{r}(\O)}\q \text{for } 1<r<\infty,
\end{equation}
where the constant $C$ depends on $r$ and
the Lipschitz character of $\O.$

Let $\hat{\w}$ be the weak solution of system \eqref{2.2}.
Then we introduce
$$
\hat{\mathbf{v}}(x)={\phi}(x)
-\zeta(x)
$$
with
$$
{\phi}(x)=\frac{1}{4\pi}\int_{\O}\frac{1}{|x-y|} \curl\u(y)dy,\q
\zeta(x)=\frac{1}{4\pi}\int_{\p\O}\frac{1}{|x-y|} \nu\times\u(y)dS_y.
$$
Using Green's formula, we have
$$
\aligned
\int_{\p\O}\frac{1}{|x-y|} \nu\times\u(y)dS_y&=\int_{\O}\frac{1}{|x-y|} \curl\u(y)dy
+\int_{\O}\nabla_y\left(\frac{1}{|x-y|}\right)\times\u(y)dy\\
&=\int_{\O}\frac{1}{|x-y|} \curl\u(y)dy
-\int_{\O}\nabla_x\left(\frac{1}{|x-y|}\right)\times\u(y)dy.
\endaligned
$$
The last integral of the above equality is divergence-free,
and hence we have $\dv{\hat{\mathbf{v}}}=0$ in $\O.$
By noting that $\Delta \zeta=0$ in $\O$, we then obtain
$$
\curl\curl \hat{\mathbf{v}}=\curl\u \q\text{in }\O.
$$
In the following, we establish the  estimate of $\curl\hat{\mathbf{v}}$.
From the trace theorem and the Calderon-Zygmund inequality, it follows that
$$
\|{\curl\phi}\|_{L^{r}(\p\O)}\leq C(r,\O)\|{\phi}\|_{W^{2, r}(\O)}
\leq C(r,\O) \|\curl \u\|_{L^{r}(\O)}.
$$
Therefore, it suffices to establish the  estimate of $\curl \zeta$.
Applying  Theorem 1.1 in \cite{Geng-Shen} again (since $\Delta \zeta=0$ in $\O$),  we have
$$
\|(\curl \zeta)^{*}\|_{L^{r}(\p\O)}
\leq C(r,\O) \left\|\frac{\p\zeta}{\p\nu}\right\|_{L^{r}(\p\O)}\q \text{for } 1<r<\infty.
$$
By the equality (see e.g. \cite{FA, FMM})
$$
\frac{\p\zeta}{\p\nu}=2\pi\nu\times\u
+\int_{\p\O}\frac{\p}{\p\nu(x)}\frac{1}{|x-y|} \nu\times\u(y)dS_y,
$$
then noting that we have,  from \cite[Theorem 1.0]{FA} and \cite{CMM},
$$
\left\|\int_{\p\O}\frac{\p}{\p\nu(x)}\frac{1}{|x-y|} \nu\times\u(y)dS_y\right\|_{L^{r}(\p\O)}
\leq C(r,\O) \left\|\nu\times\u\right\|_{L^{r}(\p\O)},
$$
we immediately obtain the estimate
$$
\|(\curl \zeta)^{*}\|_{L^{r}(\p\O)}
\leq C(r,\O) \left\|\nu\times\u\right\|_{L^{r}(\p\O)}.
$$
Combining with the estimate of
$\curl\phi$, we now get
\begin{equation}\label{2.7}
\|\curl \hat{\mathbf{v}}\|_{L^{r}(\p\O)}
\leq C \left(\|\curl \u\|_{L^{r}(\O)}
+\|\nu\times\u\|_{L^{r}(\p\O)}\right)\q \text{for } 1<r<\infty,
\end{equation}
where the constant $C$ depends on $r$ and
the Lipschitz character of $\O.$

Let $\hat{\mathbf{h}}=\hat{\w}-\hat{\mathbf{v}}.$ Then we have
$$
\aligned
\curl\curl\hat{\mathbf{h}}=0\q \text{and}\q\dv\hat{\mathbf{h}}=0 \q
&\text{in }\O,\\
 \nu\times\curl\hat{\mathbf{h}}=\nu\times(\u-\curl\hat{\mathbf{v}})\q &\text{on }\p\O.
\endaligned
$$
From the first equation, there exists a function $\hat{\varphi}$ with $\int_{\p\O} \hat{\varphi} dx=0$
such that $
\curl\hat{\mathbf h}=\nabla \hat{\varphi}$ in $\O.$
Then  from  the boundary condition, $\hat{\varphi}$ satisfies
$$
\Delta \hat{\varphi}=0\q  \text{in }\O;
\q \nabla_{\mathrm{tan}}\hat{\varphi}=\nu\times(\u-\curl\hat{\mathbf{v}})\q \text{on }\p\O.
$$
From Theorem 3.11 in \cite{MMY} we have, for $1< r<\infty,$
$$
\|(\nabla \hat{\varphi})^*\|_{L^{r}(\p\O)}\leq C(r,\O) \left(\|\nu\times(\u-\curl\hat{\mathbf{v}})\|_{L^{r}(\p\O)}\right).
$$
From \eqref{2.7} and the above inequality,  it follows that
$$
\|\curl\hat{\mathbf h}\|_{L^{r}(\p\O)}
\leq C(r,\O) \left(\|\curl\u\|_{L^{r}(\O)}
+\|\nu\times\u\|_{L^{r}(\p\O)}\right)\q \text{for } 1<r<\infty.
$$
Therefore, by \eqref{2.7} again  we have
\begin{equation}\label{2.8}
\|\curl\hat{\w}\|_{L^{r}(\p\O)}
\leq C \left(\|\curl\u\|_{L^{r}(\O)}
+\|\nu\times\u\|_{L^{r}(\p\O)}\right)\q \text{for } 1<r<\infty,
\end{equation}
where the constant $C$ depends on $r$ and
the Lipschitz character of $\O.$

 If we let
 $$
 \hat{\u}=\nabla \hat{p}+\curl\hat{\w},
 $$
then we have
 $$
\dv (\hat{\u}-\u)=0,\, \curl (\hat{\u}-\u)=0\,\,\,\text{in }\O,\q
\nu\times(\hat{\u}-\u)=0\,\,\,\text{on }\p\O.
$$
This gives $\hat{\u}=\u$ in $\O$.
Therefore,  the inequality \eqref{1.1}  holds true
since \eqref{2.6} and \eqref{2.8}.
Using Corollary 10.3(c) in \cite{MMP}, we finish our proof.
\end{proof}

\section{Proof of Theorem \ref{thm1.4}}\label{section4}


We first prove a weak
reverse H\"{o}lder inequality near the boundary for a $\curl$-type system with the coefficient matrix
symmetric and
uniformly elliptic.

\begin{Lem}\label{lem4.2}
Let $\O$ be a bounded convex domain in $\mathbb R^3, $ and let the matrix
$A(x)$ be symmetric, bounded measurable,
uniformly elliptic and in $\mathrm{VMO}(\O).$
Let $Q\in\p\O$ and $0<s<s_0$ for some $s_0.$
Suppose that $\H$ satisfies
$$
\curl\left(A(x)\curl\H\right)=\curl((1-\varphi)\Psi) \q \text{and}\q \dv\H=0 \q \text{in }\O
$$
with the boundary condition $
\nu\times\H=0 $ on $\p\O$,
where $\Psi\in L^r(\O)$ and $\varphi\in C^{\infty}(\mathbb R^3)$
is a cut-off function
such that $\varphi=1$ on $B(Q, 4s)$ and $\varphi=0$ outside of $B(Q, 8s).$
Then for any $r>2$ we have
\begin{equation}\label{5.1}
\left\{\frac{1}{s^3}\int_{\O\bigcap B(Q,s)}|\curl\H|^{r} dx\right\}^{1/r}
\leq C\left\{\frac{1}{s^3}\int_{\O\bigcap B(Q,2s)}|\curl\H|^2 dx\right\}^{1/2},
\end{equation}
where the constant $C$ depends on $r, s_0$ and the Lipschitz character of $\O.$
\end{Lem}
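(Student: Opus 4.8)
The plan is to first notice that the cut-off removes the right-hand side near $Q$: since $2s<4s$, we have $1-\varphi\equiv0$ on $B(Q,4s)$, so on $U:=\O\cap B(Q,4s)$ the field $\H$ solves the \emph{homogeneous} system
\[
\curl\big(A(x)\curl\H\big)=0,\q \dv\H=0\q\text{in }U,\q \nu\times\H=0\q\text{on }\Gamma:=\p\O\cap B(Q,4s).
\]
Thus \eqref{5.1} is purely a local interior-and-boundary regularity statement up to the convex boundary, and since $B(Q,s)\subset\subset B(Q,2s)\subset\subset B(Q,4s)$ the artificial sphere $\O\cap\p B(Q,4s)$ never interferes. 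Convexity gives $|\O\cap B(Q,\rho)|\simeq\rho^{3}$ for $\rho\le s_0$, so the weights $\tfrac{1}{s^{3}}\int$ are comparable to averages.

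The key reduction is to pass to a \emph{scalar} Neumann problem. Put $\w:=\curl\H\in L^{2}(U)$; then $\dv\w=0$ and $\curl(A\w)=0$ in $U$, while $\nu\times\H=0$ on $\Gamma$ gives, after an integration by parts, $\nu\cdot\w=\nu\cdot\curl\H=0$ on $\Gamma$ in the weak sense. Since $U$ is an intersection of two convex sets, it is convex, hence simply connected, so $\curl(A\w)=0$ produces a potential $A\w=\nabla\phi$ with $\phi\in W^{1,2}(U)$; then $\dv(A^{-1}\nabla\phi)=\dv\w=0$ in $U$ together with the conormal condition $\nu\cdot(A^{-1}\nabla\phi)=\nu\cdot\w=0$ on $\Gamma$. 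The matrix $A^{-1}$ is again symmetric, uniformly elliptic with constants $\Lambda^{-1},\lambda^{-1}$, and lies in $\mathrm{VMO}(\O)$ because $A\mapsto A^{-1}$ is Lipschitz on $\{\lambda I\le A\le\Lambda I\}$. As $\lambda|\w|\le|\nabla\phi|\le\Lambda|\w|$ pointwise, \eqref{5.1} is equivalent to the boundary weak reverse H\"older inequality
\[
\Big(\tfrac{1}{s^{3}}\int_{\O\cap B(Q,s)}|\nabla\phi|^{r}\,dx\Big)^{1/r}\le C\Big(\tfrac{1}{s^{3}}\int_{\O\cap B(Q,2s)}|\nabla\phi|^{2}\,dx\Big)^{1/2},\q r>2,
\]
for weak solutions of the homogeneous Neumann problem with $\mathrm{VMO}$ coefficients in the convex domain $\O$.

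This last estimate is where $\mathrm{VMO}$ (rather than mere boundedness, which would only give Meyers' slight improvement of the exponent) is used; it is essentially the $W^{1,p}$ theory for Neumann problems with $\mathrm{VMO}$ coefficients in convex domains, and I would prove it by the perturbation/real-variable scheme of \cite{Geng}. A boundary Caccioppoli inequality together with the Sobolev--Poincar\'e inequality (exponent $6/5$ in $\R^{3}$, valid up to the Lipschitz boundary with $\rho$-independent constant by convexity) first yields self-improving integrability $\nabla\phi\in L^{p}$ locally for some $p>2$. To reach every $r>2$ one freezes the coefficient at its average $\overline{A^{-1}}$ over $B(Q,8s)$, compares $\phi$ with the solution $\phi_{0}$ of the corresponding constant-coefficient Neumann problem on $\O\cap B(Q,2s)$, uses the $\mathrm{VMO}$-smallness of $A^{-1}-\overline{A^{-1}}$ (together with the higher integrability of $\nabla\phi$ just obtained) to make $\nabla(\phi-\phi_{0})$ small in $L^{2}$ relative to $\nabla\phi$, and feeds in the constant-coefficient bound
\[
\|\nabla\phi_{0}\|_{L^{\infty}(\O\cap B(Q,s))}\le C\Big(\tfrac{1}{s^{3}}\int_{\O\cap B(Q,2s)}|\nabla\phi_{0}|^{2}\,dx\Big)^{1/2},
\]
which in a convex domain is classical: after a linear change of variables taking $\overline{A^{-1}}$ to the identity (a map that preserves convexity) it is the local gradient estimate for the Laplace--Neumann problem, coming from interior estimates near $\O\cap\p B(Q,2s)$ and the convex-boundary gradient estimates of \cite{CM,CM1,Geng-Shen} near $\Gamma$. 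A real-variable argument then bootstraps these ingredients to \eqref{5.1} for all $r>2$. One may equally run the same perturbation argument directly for the curl-curl system, using a localized form of Lemma \ref{lem3.2} as the constant-coefficient input and carrying the condition $\nu\times\H=0$ throughout.

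The step I expect to be the main obstacle is this boundary perturbation: establishing the localized constant-coefficient $L^{\infty}$ gradient (equivalently $L^{\infty}$ curl) estimate \emph{up to the convex boundary}, and then organizing the iteration so that the $\mathrm{VMO}$ smallness is converted into $L^{r}$ control for \emph{every} $r>2$ and not merely a Meyers exponent. The remaining points — that $\phi_{0}$ is well defined on $\O\cap B(Q,2s)$ and that the artificial boundary is harmless, handled by interior estimates since \eqref{5.1} only measures on $B(Q,s)$ — are routine.
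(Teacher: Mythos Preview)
Your proposal is correct and follows essentially the same route as the paper: both recognize that $(1-\varphi)\Psi$ vanishes on $B(Q,4s)$, introduce a scalar potential $\phi$ with $A\curl\H=\nabla\phi$ on $\O\cap B(Q,4s)$, reduce \eqref{5.1} to the boundary weak reverse H\"older inequality for $\nabla\phi$ in the Neumann problem $\dv(A^{-1}\nabla\phi)=0$, $\nu\cdot(A^{-1}\nabla\phi)=0$, and then invoke the theory of \cite{Geng} combined with the convex-domain input of \cite{Geng-Shen}. Your write-up is in fact more careful on a couple of points the paper leaves implicit (simple connectedness of $U$ to obtain $\phi$, the $\mathrm{VMO}$ property of $A^{-1}$, and the derivation of the conormal condition from $\nu\times\H=0$), and you spell out the freezing-of-coefficients mechanism behind the citations, but the strategy is identical.
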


\begin{proof} From the assumptions, we have
$\curl\left(A(x)\curl\H\right)=0$ in $B(Q, 4s)$.
Thus there exists a function $\phi$ defined on
$B(Q, 4s)$ such that
\begin{equation}\label{5.7}
A(x)\curl\H=\nabla \phi.
\end{equation}
Then $\phi$ satisfies
$$
\dv(A^{-1}(x)\nabla\phi)=0\q \text{in }B(Q, 4r)\bigcap \O,\q
\nu\cdot(A^{-1}(x)\nabla\phi)=0\q \text{on }B(Q, 4r)\bigcap\p\O.
$$
Based on
Theorem 2.1 in \cite{Geng-Shen},
then from Lemma 4.1, Lemma 4.2 and Theorem 2.1 in \cite{Geng},
it follows that
$$
\left\{\frac{1}{s^3}\int_{\O\bigcap B(Q,s)}|\nabla\phi|^{r} dx\right\}^{1/r}
\leq C\left\{\frac{1}{s^3}\int_{\O\bigcap B(Q,2s)}|\nabla\phi|^2 dx\right\}^{1/2}.
$$
By applying the inequality
$$
\frac{1}{\Lambda^r}|\nabla\phi|^{r} \leq |A^{-1}(x)\nabla\phi|^{r}
\leq \frac{1}{\lambda^r}|\nabla\phi|^{r},
$$
and then using \eqref{5.7}, we immediately get \eqref{5.1}.
\end{proof}

We now give the proof of Theorem \ref{thm1.4}.

\begin{proof}[Proof of Theorem \ref{thm1.4}]
We  decompose
$$\u=\mathbf u_1+\nabla u_2+\mathbf u_3,
$$
where
$\mathbf u_1, u_2, \mathbf u_3$ are to be determined.

Step 1. Construct $\mathbf u_1.$
Consider the following Neumann problem
$$
\dv(A^{-1}(x)\nabla\phi)=0\q \text{in }\O,\q
\nu\cdot(A^{-1}(x)\nabla\phi)=\mathrm{Div}\,\mathbf g\q \text{on }\p\O.
$$
This problem studied by Geng in \cite{Geng} is solvable in Lipschitz domains
if $\mathrm{Div}\,\mathbf g\in B^{-1/r,r}(\p\O)$
with $3/2-\epsilon<r<3+\epsilon,$ see \cite[Lemma 5.2]{Geng}. To prove this,
it suffices to establish a weak
reverse H\"{o}lder inequality
$$
\left(\frac{1}{s^3}\int_{B(x_0,s)\bigcap \O}|\nabla v|^r dx\right)^{\frac{1}{r}}
\leq C_0\left(\frac{1}{s^3}\int_{B(x_0,2s)\bigcap \O}|\nabla v|^2 dx\right)^{\frac{1}{2}}
$$
for any $0<s<s_0$ ($s_0$ depends on the domain) and
any $v\in W^{1,2}(B(x_0,2s)\bigcap \O)$ satisfying
the above Neumann problem in $B(x_0,2s)\bigcap \O$
with the boundary condition $\mathrm{Div}\,\mathbf g=0$
on $B(x_0,2s)\bigcap \p\O,$ see Theorem 1.1 and Lemma 5.1 in \cite{Geng}.
For Lipschitz domains, the weak
reverse H\"{o}lder inequality only
holds for $2<r\leq 3+\epsilon$ (see \cite[Lemma 4.1]{Geng}).
However, for any convex domains,
the range of the index $r$ can be extended to $2<r<\infty$, which may be proved by
applying Theorem 2.1 in \cite{Geng-Shen} to
Lemma 4.1, Lemma 4.2 and Theorem 2.1 in \cite{Geng}.
Based on this, the conclusion of Lemma 5.2
in \cite{Geng} can be obtained for any $1<r<\infty$ if the domain $\O$ is convex.
That is, the above Neumann problem  is solvable for any  $1<r<\infty$, and
we can deduce the estimate
$$
\|\nabla\phi\|_{L^r(\O)}\leq C \|\mathrm{Div}\,\mathbf g\|_{B^{-1/r,r}(\p\O)},
$$
where the constant C depends on $r$ and the Lipschitz character of $\O$.

We now solve the following div-curl system
$$
\curl\mathbf u_1=A^{-1}(x)\nabla\phi,\q   \dv\mathbf u_1=0\q \text{in }\O,\q \nu\times\mathbf u_1=\mathbf g\q \text{on }\p\O.
$$
By the proof of Theorem 10.1 in \cite{MMP}, we can
conclude that there exists a unique solution in $L^r(\O)\bigcap L^2(\O)$ space
to this system.  Applying Theorem \ref{thm1.2}, we have $\nu\cdot\u\in L^{r}(\p\O)$
and the estimate \eqref{1.1} holds. From the integral representation formula
for vector fields (see \cite[Theorem 3.2]{MMP}) and recalling that
$A(x)$ is positive, then we obtain the estimate
$$
\|\mathbf u_1\|_{L^r(\O)}\leq C(r,\O)\left(\|\nabla\phi\|_{L^r(\O)}
+\|\mathbf g\|_{L^{r}(\p\O)}\right),
$$
see the estimate
of $\zeta(x)$ in the proof of Theorem \ref{thm1.2}
or we may use Corollary 10.3(c) in \cite{MMP}. Combining with the estimate for $\nabla\phi$ and by
the first equation in the div-curl system, we immediately get
$$\|\mathbf u_1\|_{L^r(\O)}+\|\curl\mathbf u_1\|_{L^r(\O)}\leq
C\left( \|\mathrm{Div}\,\mathbf g\|_{B^{-1/r,r}(\p\O)}+\|\mathbf g\|_{L^{r}(\p\O)}\right),
$$
where the constant C depends  on $r$ and the Lipschitz character of $\O$.

Step 2. Construct $u_2.$ By Theorem 1.3 in \cite{Geng-Shen}, we
take the Helmholtz decomposition to $\mathbf F$ and to  $\mathbf u_1:$
$$
\mathbf F=\nabla p_{\mathbf F}+\curl \w_{\mathbf F},\q
\mathbf u_1=\nabla p_{\mathbf u_1}+\curl \w_{\mathbf u_1}
$$
Let $u_2\in W_0^{1,r}(\O)$ be the weak solution of the form
$$
\int_{\O}\nabla u_2\cdot\nabla\psi=
\int_{\O}\left(\nabla p_{\mathbf F}-\nabla p_{\mathbf u_1}\right)
\cdot\nabla\psi\q \text{for any }\psi\in W_0^{1,r/(r-1)}(\O).
$$
Then there exists a constant C depending on $r$
and the Lipschitz character of $\O$ such that (see e.g. \cite{JW})
$$
\|\nabla u_2\|_{L^r(\O)}\leq C \|\nabla p_{\mathbf f}-\nabla p_{\mathbf u_1}\|_{L^{r}(\O)}
\leq C\left(\|\mathbf F\|_{L^r(\O)}+\|\mathbf u_1\|_{L^r(\O)}\right),
$$
where the last inequality follows from Theorem 1.3 in \cite{Geng-Shen}.

Step 3. Construct $\mathbf u_3.$ Consider the system
\begin{equation}\label{4.3}
\aligned
\curl\left(A(x)\curl\mathbf u_3\right)+\mathbf u_3&=\mathbf F-\mathbf u_1-\nabla u_2+\curl\mathbf f\q
&\text{in }\O,\q\\
\nu\times\mathbf u_3&=0\q &\text{on }\p\O.~
\endaligned
\end{equation}
Now we have $\dv(\mathbf F-\mathbf u_1-\nabla u_2)=0$ in $\O.$
By Poincar\'{e}'s lemma (see \cite[p.214]{DL}),  there exists a vector $\omega\in L^r(\O)$
such that $\curl\omega=\mathbf F-\mathbf u_1-\nabla u_2$ and $\omega$ satisfies the estimate
\begin{equation}\label{4.4}
\|\omega\|_{L^r(\O)}\leq C(r,\O)\left(\|\mathbf F\|_{L^r(\O)}
+\|\mathbf u_1\|_{L^r(\O)}+\|\nabla u_2\|_{L^r(\O)}\right).
\end{equation}
To obtain the existence of $\mathbf u_3$, we first assume $r\geq 2.$
From the Lax-Milgram Lemma, it follows that $\mathbf u_3\in H^1(\O).$
For $1<r<2$, it is necessary to establish the a priori estimate for $\mathbf u_3,$
then take the usual approximation argument to obtain the existence.

We now give the estimate for $\mathbf u_3.$ Note that $\mathbf u_3\in L^6(\O)$
by the imbedding theorem.
By Poincar\'{e}'s lemma again, there exists
a vector $\mathbf \psi\in L^6(\O)$ such that $\mathbf u_3=\curl\mathbf\psi.$
Actually, by Theorem 1.3 in \cite{Geng-Shen}
we can further let $\mathbf \psi$ satisfy
 $\dv\mathbf\psi=0$ in $\O$ and $\nu\cdot\mathbf\psi=0$
on $\p\O$. From Corollary \ref{cor2.4},  we have the estimate
$$
\|\mathbf\psi\|_{L^{\infty}(\O)}\leq C \|\mathbf u_3\|_{L^{3,1}(\O)}.
$$
Since $H^1(\O)$ is continuously imbedded into
the Lorentz space $L^{3,1}(\O)$ and by
$H^1$ estimate for $\u_3$, we can obtain
\begin{equation}\label{4.5}
\|\mathbf\psi\|_{L^r(\O)}\leq C\|\mathbf\psi\|_{L^{\infty}(\O)}\leq C \|\mathbf u_3\|_{H^1(\O)}
\leq C \left(\|\omega\|_{L^2(\O)}+\|\mathbf f\|_{L^2(\O)}\right),
\end{equation}
where the constants C depend on $r$ and the Lipschitz character of $\O$.

Let $\Psi=\omega+\mathbf f-\mathbf\psi.$ Then
$\mathbf u_3$ satisfies the system
$$
\curl\left(A(x)\curl\mathbf u_3\right)=\curl\Psi,\q \dv\mathbf u_3=0\q  \text{in }\O,\q \nu\times\mathbf u_3=0\q \text{on }\p\O.
$$
Based on the  weak
reverse H\"{o}lder inequality (Lemma \ref{lem4.2}),
the proof of Theorem 1.1 in \cite{Geng} with the $\nabla$ operator
replaced by the $\curl$ operator is also applicable. Thus,  we can
deduce that
$$
\|\curl\mathbf u_3\|_{L^r(\O)}\leq C(r, \O) \|\Psi\|_{L^r(\O)}.
$$
Therefore, by Theorem \ref{thm1.2} (as the estimate of $\u_1$)
we have that
$$
\|\mathbf u_3\|_{L^r(\O)}+\|\curl\mathbf u_3\|_{L^r(\O)}
\leq C(r, \O) \|\Psi\|_{L^r(\O)}
$$
Since $\Psi=\omega+\mathbf f-\mathbf\psi$ and the
estimate \eqref{4.5} on $\psi$, we then get
$$
\|\mathbf u_3\|_{L^r(\O)}+\|\curl\mathbf u_3\|_{L^r(\O)}
\leq C(r,\O) \left(\|\omega\|_{L^r(\O)}+\|\mathbf f\|_{L^r(\O)}\right).
$$
From \eqref{4.4}, we now have
\begin{equation}\label{11.5}
\aligned
&\|\mathbf u_3\|_{L^r(\O)}+\|\curl\mathbf u_3\|_{L^r(\O)}\\
\leq &C(r,\O) \left(\|\mathbf F\|_{L^r(\O)}
+\|\mathbf u_1\|_{L^r(\O)}+\|\nabla u_2\|_{L^r(\O)}+\|\mathbf f\|_{L^r(\O)}\right),
\endaligned
\end{equation}
Plugging the estimates of $\u_1$ (step 1) and of $\nabla u_2$ (step 2)
back to the above inequality, then noting
that $\u=\mathbf u_1+\nabla u_2+\mathbf u_3$,
we finally obtain that, for $2\leq r<\infty$,
\begin{equation}\label{4.6}
\aligned
&\|\mathbf u\|_{L^r(\O)}+\|\curl\mathbf u\|_{L^r(\O)}\\
\leq &C \left(\|\mathbf F\|_{L^r(\O)}
+\|\mathbf f\|_{L^r(\O)}+\|\mathrm{Div}\,\mathbf g\|_{B^{-1/r,r}(\p\O)}+\|\mathbf g\|_{L^{r}(\p\O)}\right),
\endaligned
\end{equation}
where the constant C depends only on $r$ and the Lipschitz character of $\O$.

To obtain the  a priori estimate for $\mathbf u_3$ if $1< r<2,$
we take the duality argument.
For any given vector  $\mathbf G\in L^{r/(r-1)}(\O),$
we solve the following system
$$
\curl (A(x)\curl\mathbf v)+\mathbf v=\mathbf G\q
\text{in }\O,\q \nu\times\mathbf v=0\q \text{on }\p\O.
$$
From \eqref{4.6}, we have the estimate for $\mathbf v:$
\begin{equation}\label{4.7}
\|\mathbf v\|_{L^{r/(r-1)}(\O)}+\|\curl\mathbf v\|_{L^{r/(r-1)}(\O)}
\leq C(r,\O) \|\mathbf G\|_{L^{r/(r-1)}(\O)}.
\end{equation}
Let $\langle\cdot, \cdot\rangle$ denote the duality pairing between
$L^r(\O)$ and $L^{r/(r-1)}(\O).$ Since $A(x)=A^{T}(x),$ we have
$$
\langle\mathbf u_3, \mathbf G\rangle
=\langle\mathbf u_3, \curl (A(x)\curl\mathbf v)+\mathbf v\rangle
=\langle\curl (A(x)\curl\mathbf u_3)+\mathbf u_3, \mathbf v\rangle.
$$
From \eqref{4.3},  it follows that
$$
\langle\mathbf u_3, \mathbf G\rangle
=\langle\curl (\omega+\mathbf f), \mathbf v\rangle
=\langle \omega+\mathbf f, \curl \mathbf v\rangle
$$
Combining with \eqref{4.7}, we have
$$
\|\mathbf u_3\|_{L^r(\O)}
\leq C \left(\|\omega\|_{L^r(\O)}+\|\mathbf f\|_{L^r(\O)}\right).
$$
To obtain the estimate for $\curl\mathbf u_3$,
we solve the following system
$$
\curl (A(x)\curl\mathbf m)+\mathbf m=\curl\mathbf h\q
\text{in }\O,\q \nu\times\mathbf m=0\q \text{on }\p\O
$$
for any given vector  $\mathbf h\in L^{r/(r-1)}(\O).$
From \eqref{4.6}, we have the estimate for $\mathbf m:$
\begin{equation}\label{10.8}
\|\mathbf m\|_{L^{r/(r-1)}(\O)}+\|\curl\mathbf m\|_{L^{r/(r-1)}(\O)}
\leq C(r,\O) \|\mathbf h\|_{L^{r/(r-1)}(\O)}.
\end{equation}
Then
$$
\langle\curl\mathbf u_3, \mathbf h\rangle
=\langle\mathbf u_3, \curl \mathbf h\rangle
=\langle \omega+\mathbf f, \curl \mathbf m\rangle.
$$
This shows the estimate, by \eqref{10.8},
$$
\|\curl\mathbf u_3\|_{L^r(\O)}
\leq C(r,\O) \left(\|\omega\|_{L^r(\O)}+\|\mathbf f\|_{L^r(\O)}\right).
$$
Therefore, for any $1<r<\infty,$ we always have the estimate
\eqref{11.5}.

From step 1-step 3, we now have the inequality \eqref{1.5}.
The uniqueness is obvious since \eqref{1.5}.
If  $\dv\mathbf F\in L^r(\O)$, then $\dv\mathbf u\in L^r(\O).$
It follows from Theorem \ref{thm1.2} that we have
$\u\in L_{1/r}^r(\O)$ if $2\leq r<\infty$
and $\u\in B_{1/r}^{r, 2}(\O)$ if $1<r<2.$
We end our proof.
\end{proof}

Finally, we consider the Maxwell-type system \eqref{1.10}
in Lipschitz domains. For simplicity, we let $\mathbf g=0$.
This system was studied in
the space $H^{s,r}_0 (curl; \O)$ by Kar and Sini, see \cite{KS}.
When  $s=0$, they gave a condition that
 characterizes the range of $r$ such that the problem is well-posed,
 see Remark 2.2 in \cite{KS}.
However, we may notice that by this condition it is not easy to
check how large the range for $r$ is.

Based on Lemma \ref{lem3.3} below and the proof of Theorem \ref{thm1.4},
we say, to show the well-posedness of this problem,
 the condition given by Kar and Sini is not needed if the coefficient matrix
$A(x)$ is symmetric, bounded measurable,
uniformly elliptic and in $\mathrm{VMO}(\O).$

Denote by
$$
I=\left\{r~:~ \frac{2}{3}\left(1-\frac{1}{p_{\O}}\right)<\frac{1}{r}<\frac{1}{3}\left(\frac{2}{p_{\O}}+1\right)\right\},
$$
where $p_{\O}$ is determined by the Lipschitz character  of the
 domain $\O$, see \cite{MM}.

\begin{Thm}\label{thm3.2}
Let $\O$ be a bounded Lipschitz domain in $\mathbb{R}^3$.
Assume that the coefficient matrix
$A(x)$ is symmetric, bounded measurable,
uniformly elliptic and in $\mathrm{VMO}(\O).$
Suppose that $\mathbf F\in L^r(\O)$ and
$\mathbf f\in L^r(\O)$  with $r\in I$,
then
there exists a unique solution $\u\in H^r(\curl,\O)$
of system \eqref{1.10} with  $\mathbf g=0$,
and the solution $\u$ satisfies the estimate
$$
\|\mathbf u\|_{L^r(\O)}+\|\curl\mathbf u\|_{L^r(\O)}
\leq C \left(\|\mathbf F\|_{L^r(\O)}+\|\mathbf f\|_{L^r(\O)}\right),
$$
where the constant $C$ depends on $r$ and
the Lipschitz character  of $\O.$
\end{Thm}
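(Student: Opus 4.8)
The plan is to run the argument of Theorem~\ref{thm1.4} in a Lipschitz domain, with the simplification that $\mathbf g=0$ collapses the three-term splitting of that proof to $\u=\nabla u_2+\mathbf u_3$; indeed the Neumann problem producing $\mathbf u_1$ now has zero data, so $\mathbf u_1=0$. To build $u_2$ I would take the $L^r$-Hodge decomposition $\mathbf F=\nabla p_{\mathbf F}+\curl\w_{\mathbf F}$ in $\O$, valid in Lipschitz domains for $r$ in the range attached to $p_{\O}$ (see \cite{FMM, MMP}), and let $u_2\in W_0^{1,r}(\O)$ solve $\int_{\O}\nabla u_2\cdot\nabla\psi=\int_{\O}\nabla p_{\mathbf F}\cdot\nabla\psi$ for all $\psi\in W_0^{1,r/(r-1)}(\O)$; the $W^{1,r}$-solvability of the Dirichlet Laplacian on Lipschitz domains (see \cite{JKD}) then gives $\|\nabla u_2\|_{L^r(\O)}\le C\|\nabla p_{\mathbf F}\|_{L^r(\O)}\le C\|\mathbf F\|_{L^r(\O)}$, with $C$ depending only on $r$ and the Lipschitz character. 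No hypothesis on $\dv\mathbf F$ is needed here, since $u_2$ is defined weakly in $W_0^{1,r}$.

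Next I would define $\mathbf u_3$ as the solution of $\curl(A(x)\curl\mathbf u_3)+\mathbf u_3=\mathbf F-\nabla u_2+\curl\mathbf f$ in $\O$, $\nu\times\mathbf u_3=0$ on $\p\O$; for $r=2$ it exists by Lax--Milgram, and it is divergence free since $\dv(\mathbf F-\nabla u_2)=0$. Poincar\'{e}'s lemma then furnishes $\omega\in L^r(\O)$ with $\curl\omega=\mathbf F-\nabla u_2$ and $\|\omega\|_{L^r(\O)}\le C(\|\mathbf F\|_{L^r(\O)}+\|\nabla u_2\|_{L^r(\O)})$, and a vector $\mathbf\psi$ with $\mathbf u_3=\curl\mathbf\psi$, $\dv\mathbf\psi=0$ in $\O$ and $\nu\cdot\mathbf\psi=0$ on $\p\O$. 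Here the convexity-dependent Corollary~\ref{cor2.4}, used in the proof of Theorem~\ref{thm1.4} to bound $\mathbf\psi$ in $L^{\infty}$, is no longer available; but since $I\subset(3/2-\var,3+\var)$, and in particular $r<6$, one may instead use the plain chain $\|\mathbf\psi\|_{L^r(\O)}\le C\|\mathbf\psi\|_{L^6(\O)}\le C\|\mathbf\psi\|_{W^{1,2}(\O)}\le C\|\mathbf u_3\|_{L^2(\O)}\le C(\|\omega\|_{L^2(\O)}+\|\mathbf f\|_{L^2(\O)})$. With $\Psi=\omega+\mathbf f-\mathbf\psi\in L^r(\O)$, the vector $\mathbf u_3$ then satisfies $\curl(A(x)\curl\mathbf u_3)=\curl\Psi$, $\dv\mathbf u_3=0$ in $\O$, $\nu\times\mathbf u_3=0$ on $\p\O$.

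The analytic heart is a weak reverse H\"{o}lder inequality for this $\curl(A\curl)$ system near the Lipschitz boundary, the exact analogue of Lemma~\ref{lem4.2}: on a surface ball where the right-hand side is a curl one writes $A(x)\curl\mathbf u_3=\nabla\phi$ and reduces to the scalar equation $\dv(A^{-1}(x)\nabla\phi)=0$ with vanishing conormal derivative, for which the reverse H\"{o}lder estimate of Lemma~4.1 in \cite{Geng} holds for $2<r<3+\var$ in Lipschitz domains; the ellipticity bounds transfer it back to $\curl\mathbf u_3$. Feeding this into the real-variable argument of Theorem~1.1 in \cite{Geng}, with $\nabla$ replaced throughout by $\curl$, yields $\|\curl\mathbf u_3\|_{L^r(\O)}\le C\|\Psi\|_{L^r(\O)}$ for $2<r<3+\var$, and the div-curl estimate of Lemma~\ref{lem3.3} (the Lipschitz analogue of Theorems~\ref{thm1.1}--\ref{thm1.2}, valid for $r\in I$) then gives $\|\mathbf u_3\|_{L^r(\O)}\le C\|\curl\mathbf u_3\|_{L^r(\O)}$, since $\dv\mathbf u_3=0$ and $\nu\times\mathbf u_3=0$; chaining with the bounds for $\omega$, $\mathbf\psi$ and $\nabla u_2$ gives $\|\mathbf u_3\|_{L^r(\O)}+\|\curl\mathbf u_3\|_{L^r(\O)}\le C(\|\mathbf F\|_{L^r(\O)}+\|\mathbf f\|_{L^r(\O)})$ for $r$ in the upper part of $I$. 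For $r$ below $2$ I would dualize as in Theorem~\ref{thm1.4}: since $A=A^{T}$, test against the solutions of $\curl(A\curl\mathbf v)+\mathbf v=\mathbf G$ and $\curl(A\curl\mathbf m)+\mathbf m=\curl\mathbf h$ with exponent $r/(r-1)\in(2,3+\var)$, for which the estimate is already proved, to get $\langle\mathbf u_3,\mathbf G\rangle=\langle\omega+\mathbf f,\curl\mathbf v\rangle$ and likewise for $\curl\mathbf u_3$, which gives the same bound for all $r\in I$.

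Assembling, $\u=\nabla u_2+\mathbf u_3$ lies in $H^r(\curl,\O)$, satisfies the asserted estimate, and has $\nu\times\u=0$ since $u_2\in W_0^{1,r}(\O)$ and $\nu\times\mathbf u_3=0$; uniqueness is immediate from linearity and the estimate, and existence follows from Lax--Milgram at $r=2$ together with the a priori estimate and the usual approximation for $r\ne2$. The admissible set $I$ emerges as the intersection of the ranges used above --- boundedness of the $L^r$-Hodge decomposition in $\O$, $W^{1,r}$-solvability of the Dirichlet Laplacian (and of the adjoint Neumann problem in the duality step), and the window $2<r<3+\var$ for the weak reverse H\"{o}lder inequality together with its dual --- each of which on a Lipschitz domain is governed by $p_{\O}$; matching this intersection with the interval displayed before the theorem is routine bookkeeping. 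I expect the main obstacle to be precisely this reverse-H\"{o}lder step together with the range accounting: one must check that the reduction to $\dv(A^{-1}(x)\nabla\phi)=0$ and the real-variable machinery of \cite{Geng} carry over to the operator $\curl(A(x)\curl)$ on Lipschitz domains with all constants depending only on the Lipschitz character, so that --- unlike the convex case --- no step covertly uses more than a Lipschitz bound, and that the composed range is exactly $I$.
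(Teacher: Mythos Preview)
Your overall strategy is the paper's own: rerun the decomposition of Theorem~\ref{thm1.4} with $\mathbf g=0$, replacing each convexity-specific ingredient by its Lipschitz counterpart, and most of your substitutions (the reverse H\"older step via \cite{Geng}, the duality for $r<2$, the range bookkeeping) are on target. There is, however, a genuine gap at precisely the point where you replace Corollary~\ref{cor2.4}.

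You bound the potential $\mathbf\psi$ by the chain
\[
\|\mathbf\psi\|_{L^r(\O)}\le C\|\mathbf\psi\|_{L^6(\O)}\le C\|\mathbf\psi\|_{W^{1,2}(\O)}\le C\|\mathbf u_3\|_{L^2(\O)}.
\]
The last inequality is the Gaffney-type embedding $X_\sigma^2\hookrightarrow H^1(\O)$ (that is, $\dv\mathbf\psi=0$, $\curl\mathbf\psi\in L^2$, $\nu\cdot\mathbf\psi=0$ $\Rightarrow$ $\mathbf\psi\in H^1$), and this is \emph{itself} a convexity-dependent statement: it is Theorem~2.17 of \cite{ABDG}, and it fails in general Lipschitz domains (re-entrant corners produce fields in $X_\sigma^2\setminus H^1$). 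Costabel's result \cite{Cos} gives only $\mathbf\psi\in H^{1/2}(\O)\hookrightarrow L^3(\O)$, which does not cover the part of $I$ above $3$ when $p_\O>2$. So your ``plain chain'' covertly uses more than a Lipschitz bound, exactly the failure mode you flag at the end.

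The paper's replacement is Lemma~\ref{lem3.3}, and that lemma is the real content of Theorem~\ref{thm3.2}: applied with $\psi=\mathbf u_3$ (which lies in $L^3$ via $H^{1/2}$ regularity from \cite{Cos}), it constructs $\mathbf\psi$ with $\curl\mathbf\psi=\mathbf u_3$, $\dv\mathbf\psi=0$, $\nu\cdot\mathbf\psi=0$ and $\|\mathbf\psi\|_{L^r(\O)}\le C\|\mathbf u_3\|_{L^3(\O)}$ for every $r\in I$. The proof is not a one-liner: one extends $\mathbf u_3$ to a compactly supported divergence-free field on $\mathbb R^3$ by solving an auxiliary Neumann problem in $B_R\setminus\O$, takes the curl of the Newtonian potential (landing in $W^{1,3}$, hence $L^r$), and corrects by a harmonic gradient whose $L^r$ bound is exactly what the Lipschitz Neumann theory furnishes on $I$. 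You invoke Lemma~\ref{lem3.3} later, but for the wrong purpose (to bound $\|\mathbf u_3\|_{L^r}$ by $\|\curl\mathbf u_3\|_{L^r}$); its actual role is to bound $\mathbf\psi$, while the $\|\mathbf u_3\|_{L^r}$ estimate comes from the Lipschitz integral-representation bounds of \cite{MMP} (their Corollary~10.3), as in the treatment of $\mathbf u_1$ in Theorem~\ref{thm1.4}.
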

\begin{proof}
The proof is quite similar to that of Theorem \ref{thm1.4}, we here omit it.
\end{proof}

\begin{Lem}\label{lem3.3}
Let $\O$ be a Lipschitz domain and let $r\in I.$ For any $\mathbf\psi\in L^3(\O)$
with $\dv\psi=0$ in $\O,$
there exists
a vector $\omega\in L^r(\O)$ with $\dv\omega=0$ in $\O$ and $\nu\cdot\omega=0$
on $\p\O$ such that $\psi=\curl\omega$, and we have the estimate
\begin{equation}\label{4.8}
\|\omega\|_{L^r(\O)}\leq C \|\psi\|_{L^3(\O)},
\end{equation}
where  the constant $C$ depends on $r$ and
the Lipschitz character of $\O.$
\end{Lem}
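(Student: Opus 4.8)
The plan is to realize $\omega$ as the curl of the solution of a vector Poisson problem of the type \eqref{2.1}, and to read the $L^r$ bound off from the regularity theory for that problem on Lipschitz domains --- the very theory that determines $p_\O$ and hence the interval $I$. Concretely, first I would let $\w$ be the weak solution of
$$
\curl\curl\w=\psi,\q \dv\w=0 \q \text{in }\O,\q \nu\times\w=0 \q \text{on }\p\O,
$$
that is, the vector $\w\in V_\sigma^2$ with $\int_\O\curl\w\cdot\curl\mathbf\Phi\,dx=\int_\O\psi\cdot\mathbf\Phi\,dx$ for every $\mathbf\Phi\in V_\sigma^2$ (this is problem \eqref{2.1} with $\psi$ in the role of $\curl\u$, the compatibility condition being precisely the hypothesis $\dv\psi=0$; since $\psi\in L^3(\O)\subset L^2(\O)$ it is solvable by the Lax--Milgram lemma). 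Setting $\omega=\curl\w$ one has $\curl\omega=\curl\curl\w=\psi$ and $\dv\omega=0$ automatically, and, as observed in the proof of Lemma \ref{lem3.2}, the condition $\nu\times\w=0$ forces $\curl\w\in X_\sigma^2$, so that $\nu\cdot\omega=0$ on $\p\O$ as well. Hence $\omega$ already enjoys all the structural properties required, and it remains only to establish \eqref{4.8}; since $\curl$ is a first-order operator this will follow once we know that $\|\w\|_{L_1^r(\O)}\le C\,\|\psi\|_{L^3(\O)}$.

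Second, I would bridge the gap between the $L^3$ datum and the datum space natural for the Poisson problem by a Sobolev embedding. Since $\O$ is bounded and Lipschitz, $W^{1,q}(\O)\hookrightarrow W^{1,1}(\O)\hookrightarrow L^{3/2}(\O)$ for every $q\ge1$ (the last embedding being the Gagliardo--Nirenberg--Sobolev inequality); dualizing, $L^3(\O)\hookrightarrow L_{-1}^r(\O)$ for every $1<r<\infty$ with $\|\psi\|_{L_{-1}^r(\O)}\le C\,\|\psi\|_{L^3(\O)}$, where $L_{-1}^r(\O)$ denotes the dual of $L_1^{r/(r-1)}(\O)$. Third --- the only non-elementary ingredient --- I would invoke the well-posedness, on a bounded Lipschitz domain, of the vector Poisson problem above as a map from data in $L_{-1}^r(\O)$ to solutions in $L_1^r(\O)$: this is M.~Mitrea's theorem on Hodge decompositions and vector Poisson problems in \cite{MM} (see also \cite{JKD}), whose range of admissible exponents, once the two-derivative shift of $\curl\curl$ is accounted for and one restricts to the level $s=0$, unwinds to exactly the condition defining $I$. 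Applying it with the datum $\psi\in L_{-1}^r(\O)$ produces $\w\in L_1^r(\O)$ with $\|\w\|_{L_1^r(\O)}\le C\,\|\psi\|_{L_{-1}^r(\O)}$, and chaining the three estimates yields
$$
\|\omega\|_{L^r(\O)}=\|\curl\w\|_{L^r(\O)}\le C\,\|\w\|_{L_1^r(\O)}\le C\,\|\psi\|_{L_{-1}^r(\O)}\le C\,\|\psi\|_{L^3(\O)},
$$
which is \eqref{4.8}.

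The main obstacle is Step~3: one must verify that the solvability range of the vector Poisson problem with the tangential condition $\nu\times\w=0$ and the gauge $\dv\w=0$ on a general bounded Lipschitz domain is exactly $I$, i.e.\ that the appropriate slice of M.~Mitrea's region $\mathcal{R}_\O$ matches the definition of $I$ through $p_\O$; and, relatedly, that the finite-dimensional space of Hodge (harmonic) fields --- which is nontrivial when $\O$ is not a topological ball --- obstructs neither the existence of $\w$ nor the uniqueness statements one wants (for a topological ball, and in particular in the situation where the lemma is used, $\dv\psi=0$ already makes $\psi$ exact and this space is trivial, but in general $\psi$ should be taken orthogonal to it, and the precise choice of the negative space $L_{-1}^r(\O)$ has to be matched with the boundary condition). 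A secondary, routine point is to justify the membership $\curl\w\in X_\sigma^2$, in particular $\nu\cdot\curl\w=0$, at the available regularity; this is handled by the density and approximation arguments of Section~2 together with the surface identity relating $\nu\cdot\curl\w$ to $\mathrm{Div}(\nu\times\w)$.
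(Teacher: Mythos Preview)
Your approach is correct but takes a genuinely different route from the paper. You realize $\omega$ as $\curl\w$ for the solution $\w$ of the vector Poisson problem \eqref{2.1} with datum $\psi$, and then read off the $L^r$ bound from the sharp Hodge/Poisson regularity theory of \cite{MM}, combined with the dual Sobolev embedding $L^3(\O)\hookrightarrow L^r_{-1}(\O)$. The paper instead follows a Costabel-type construction \cite{Cos}: it first observes that it suffices to treat $r>3$ (since $\O$ is bounded), extends $\psi$ to a divergence-free field $\mathbf f$ on all of $\mathbb{R}^3$ by solving a \emph{scalar} Neumann problem on a shell $B_R\setminus\O$, takes the curl of the Newtonian potential of $\mathbf f$ to obtain an explicit first approximation $\mathbf v$ with $\curl\mathbf v=\psi$ and $\|\mathbf v\|_{L^r(\O)}\le C\|\mathbf v\|_{W^{1,3}(B_R)}\le C\|\psi\|_{L^3}$ via Calder\'on--Zygmund plus Sobolev, and finally corrects the boundary condition by subtracting $\nabla\varphi$, where $\varphi$ solves a scalar Laplace--Neumann problem on $\O$ with datum $-\nu\cdot\mathbf v\in B^{-1/r,r}(\p\O)$. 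Thus the paper reduces everything to \emph{scalar} boundary value problems and Calder\'on--Zygmund theory, so the restriction $r\in I$ enters transparently at that last Neumann step; your argument is shorter and more conceptual, but the burden is shifted entirely onto checking that the relevant slice of the solvability region in \cite{MM} for the tangential vector Poisson problem coincides with $I$ --- consistent, since $p_\O$ is defined through that reference, but nontrivial bookkeeping that the paper's hands-on construction sidesteps. Neither argument treats the harmonic-field obstruction in multiply connected $\O$ more carefully than the other.
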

\begin{proof}
It suffices to show that the
inequality \eqref{4.8} holds for $r>3.$
The method of our proof goes back to \cite{Cos}.
As in \cite{Cos}, take $R$ sufficiently large such that $\O\subset B_R.$
Let $\chi$ be the solution of the equation
$$
\Delta \chi=0 \q  \text{in }B_R\backslash\O;
\q \frac{\p \chi}{\p \nu}=\nu\cdot \mathbf \psi \q  \text{on }\p\O;
\q \frac{\p \chi}{\p \nu}=0 \q  \text{on }\p B_R.
$$
It follows that
$$
\|\nabla \chi\|_{L^3(B_R\backslash\O)}
\leq C(\O) \|\nu\cdot \mathbf \psi\|_{B^{-1/3,3}(\p\O)}
\leq C(\O) \|\mathbf \psi\|_{L^{3}(\O)}.
$$
Let
$$
\mathbf f=\mathbf \psi \q  \text{in }\O; \q
\mathbf f=\nabla \chi \q  \text{in }B_R\backslash\O;\q
\mathbf f=0 \q  \text{in }\mathbb{R}^3\backslash B_R.
$$
Then we have
$$
\dv\mathbf f=0\q  \text{in the sense of distribution in }\mathbb{R}^3.
$$
Denote by
$$
\mathbf v=\curl\int_{\mathbb{R}^3}\frac{1}{|x-y|} \mathbf f(y) dy.
$$
By the Calderon-Zygmund inequality we have
\begin{equation}\label{4.9}
\aligned
\|\mathbf v\|_{L^{r}(\O)}\leq C\|\mathbf v\|_{W^{1,3}(B_R)}\leq C \|\mathbf f\|_{L^{3}(\mathbb{R}^3)}
\leq &C \left(\|\mathbf \psi\|_{L^{3}(\O)}+\|\nabla \chi\|_{L^3(B_R\backslash\O)}\right)\\
\leq&
C \|\psi\|_{L^{3}(\O)},
\endaligned
\end{equation}
where the constants C depend only on the Lipschitz character of $\O$.

Introduce $\mathbf h=\omega-\mathbf v$. Then
$$
\curl\mathbf h=0,\q \dv\mathbf h=0 \q  \text{in }\O,
\q \nu\cdot\mathbf h=-\nu\cdot\mathbf v\q \text{on }\p\O.
$$
Thus there exists a function $\varphi$ such that $
\mathbf h=\nabla \varphi$ in $\O.$
This gives that
$$
\Delta \varphi=0\q  \text{in }\O;
\q \frac{\p \varphi}{\p \nu}=-\nu\cdot\mathbf v\q \text{on }\p\O.
$$
Therefore,
$$
\|\mathbf h\|_{L^{r}(\O)}=\|\nabla \varphi\|_{L^{r}(\O)}\leq C (r, \O)\|\nu\cdot\mathbf v\|_{B^{-1/r,r}(\p\O)}
\leq C(r, \O) \|\mathbf v\|_{L^{r}(\O)}.
$$
Combining with \eqref{4.9}, we obtain the inequality \eqref{4.8}.
We end our proof.
\end{proof}

\v0.2in

\noindent{\bf ACKNOWLEDGMENTS}
\v0.1in
The author is grateful to his supervisor,
Professor Xingbin Pan, for guidance and
constant encouragement.
The work was supported by the
National Natural Science Foundation of China
grant No. 11771135, 11671143.

\appendix{}

\section{Proof of inquality \eqref{3.6}}

In this section we give the proof of \eqref{3.6}
in Lemma \ref{lem3.2} if $\u\in C^3({\O})\bigcap C^2(\bar{\O})$
and  the domain $\O$ is smooth. The proof dues to  Cianchi and Maz'ya
(see \cite{CM}).

\begin{proof}
Introduce the distribution function of $v$ (see \cite{CM}):
$$\mu_{v}(t)=\mu(\{|v|>t\}),\q t>0,
$$
and the nonincreasing rearrangement of $v$:
\begin{equation}\label{mm2.6}
v^{*}(s)=\sup\{t>0, \mu_{v}(t)> s\},\q t>0.
\end{equation}

By the isoperimetric inequality and the coarea formula (see \cite[Lemma 5.2]{CM}),
for $t\geq |\H|^*(|\Omega|/2)$ we can obtain that
\begin{equation}\label{2.15}
\left(\int_{\{|\H|=t\}}
|\nabla|\H||dS\right)^{-1}\leq  C \left(-\mu_{|\H|}'(t)\right)\mu_{|\H|}^{-\frac{4}{3}}(t),
\end{equation}
where the constant $C$ depends on the Lipschitz character of $\O.$
Note that (see the inequality (6.38) in \cite{CM})
$$
\int_{\{|\H|=t\}}|\curl\u| dS
\leq \left(-\frac{d}{dt}\int_{\{|\H|>t\}} |\curl\u|^2 dx\right)^{1/2}
\left(\int_{\{|\H|=t\}} |\nabla |\H|| dS\right)^{1/2}.
$$
Denote by $t_0:=|\H|^*(|\Omega|/2).$
Then for any $T$ satisfying $t_0<T<\|\H\|_{L^{\infty}(\O)}$ we have
$$
\aligned
&\int_{t_0}^{T}\left(-\frac{d}{dt}\int_{\{|\H|>t\}} |\curl\u|^2 dx\right)^{1/2}
\left(\int_{\{|\H|=t\}} |\nabla |\H|| dS\right)^{-1/2} dt\\
\leq & C(\O) \int_{t_0}^{T}\left(\mu_{|\H|}'(t)
\frac{d}{dt}\int_{\{|\H|>t\}} |\curl\u|^2 dx\right)^{1/2}
\mu_{|\H|}^{-\frac{2}{3}}(t)dt.
\endaligned
$$
Since
$$
\aligned
&\int_{t_0}^{T}\left(\mu_{|\H|}'(t)
\frac{d}{dt}\int_{\{|\H|>t\}} |\curl\u|^2 dx\right)^{1/2}
\mu_{|\H|}^{-\frac{2}{3}}(t)dt\\
\leq&
\int_{0}^{|\O|}\left(
\frac{d}{ds}\int_{\{|\H|>|\H|^*(s)\}} |\curl\u|^2 dx\right)^{1/2}
s^{-\frac{2}{3}}ds
\endaligned
$$
and from \cite[Proposition 3.4, Lemma 3.5]{CM1} (also see \cite{CM}), we have
$$
\int_{0}^{|\O|}\left(
\frac{d}{ds}\int_{\{|\H|>|\H|^*(s)\}} |\curl\u|^2 dx\right)^{1/2}
s^{-\frac{2}{3}}ds\leq C(\O)
\|\curl\u\|_{L^{3,1}(\O)}.
$$
Then we obtain that
\begin{equation}\label{m2.15}
\int_{t_0}^{T} \left(\int_{\{|\H|=t\}}
|\nabla|\H||dS\right)^{-1}\int_{\{|\H|=t\}}|\curl\u| dS dt
\leq  C(\O) \|\curl\u\|_{L^{3,1}(\O)}.
\end{equation}
Similarly, we have (\cite[Lemma 3.6]{CM1})
\begin{equation}\label{m2.16}
\int_{t_0}^{T}\left(\int_{\{|\H|=t\}}
|\nabla|\H||dS\right)^{-1} \int_{\{|\H|>t\}} |\curl\u|^2 dx
dt\leq  C(\O) \|\curl\u\|_{L^{3,1}(\O)}^2,
\end{equation}
where the constants $C$ in \eqref{m2.15}
and \eqref{m2.16} depend on the Lipschitz character of $\O.$

Therefore, from \eqref{3.3}, \eqref{m2.15} and \eqref{m2.16} we have
$$
T^2-t_0^2\leq C T\|\curl\u\|_{L^{3,1}(\O)}+ C  \|\curl\u\|_{L^{3,1}(\O)}^2.
$$
Note that
$$
t_0:=|\H|^*(|\Omega|/2)
\leq \frac{2}{|\O|}\int_{\O} |\H| dx\leq
 C(\O)\|\curl\u\|_{L^{3,1}(\O)}.
$$
Then we have
$$
T\leq C(\O) \|\curl\u\|_{L^{3,1}(\O)}.
$$
Now letting $T\to\|\H\|_{L^{\infty}(\O)}$, we obtain that
\begin{equation}\label{3.11}
\|\curl\w\|_{L^{\infty}(\O)}\leq C(\O) \|\curl\u\|_{L^{3,1}(\O)}.
\end{equation}
We end our proof.
\end{proof}

\vspace {0.5cm}
\begin {thebibliography}{DUMA}

\bibitem[1]{AD}  R.A. Adams, J.J.F. Fournier,
{\it  Sobolev Spaces,}
Academic Press, New York, 2003.

\bibitem[2]{ABDG} C. Amrouche, C. Bernardi, M. Dauge, V. Girault,
{Vector potentials in three dimensional nonsmooth domains,}
Math. Methods Appl. Sci. 21 (1998) 823-864.

\bibitem[3]{CM} A. Cianchi, V. A. Maz'ya,
{Global boundedness of the gradient for a class of nonlinear elliptic
systems}, Arch. Ration. Mech. Anal. {212} (2014) 129-177.

\bibitem[4]{CM1} A. Cianchi, V.A. Maz'ya,
{Global Lipschitz regularity for a class of quasilinear elliptic
equations}, Comm. Part. Differ. Equ. {36} (2011)  100-133.

\bibitem[5]{CMM} R. Coifman, A. McIntosh, Y. Meyer,
{L'int\'{e}grale de Cauchy d\'{e}finit un
op\'{e}rateur bor\'{n}e sur
$L_2$ pour les courbes
lipschitziennes,}
Ann. of Math. 116 (1982) 361-387.

\bibitem[6]{Cos}  M. Costabel,
{A remark on the regularity of solutions of
Maxwell's equations on Lipschitz
domains,}
Math. Methods Appl. Sci. 12 (1990) 365-368.

\bibitem[7]{DK} B. Dahlberg, C. Kenig, {Hardy
spaces and the Neumann problem in $L^p$
for Laplace's equation in Lipschitz domains,}
Ann. of Math. 125 (1987) 437-466.

\bibitem[8]{DL}  R. Dautray,  J.L. Lions,  {Mathematical analysis and numerical
methods for science and technology, vol. 3.} Springer-Verlag, New
York (1990).

\bibitem[9]{FA}  E. Fabes,
{Layer potential methods for boundary value
problems on Lipschitz domains,}
Lecture Notes in Math. 1344 (1988) 55-80.

\bibitem[10]{FMM}  E. Fabes, O. Mendez, M. Mitrea,
{Boundary layers on Sobolev-Besov spaces and Poisson's
equation for the Laplacian in Lipschitz domains,}
J. Funct. Anal. 159 (1998) 323-368.

\bibitem[11]{Geng} J. Geng, {$W^{1,p}$ estimates for elliptic problems with Neumann
boundary conditions in Lipschitz domains,} Advances in Mathematics 229 (2012) 2427-2448.

\bibitem[12]{Geng-Shen} J. Geng, Z. Shen, {The Neumann problem and Helmholtz
decomposition in convex domains,} J. Funct. Anal. 259 (2010) 2147-2164.

\bibitem[13]{GP} P. Grisvard,
   {Elliptic problems in nonsmooth domains}, Pitman, Boston, 1985.

\bibitem[14]{JKN} D. Jerison, C. Kenig,
{The Neumann problem in Lipschitz domains,}
Bull. Amer. Math. Soc. (N.S.) 4 (1981) 203-207.

\bibitem[15]{JKD} D. Jerison, C. Kenig,
{The inhomogeneous Dirichlet problem in Lipschitz domains,}
J. Funct. Anal. 130 (1995) 161-219.

\bibitem[16]{JW} H. Jia, L. Wang, {Quasilinear elliptic equations on convex domains,}
 J. Math. Anal. Appl.
433(1) (2016) 509-524

\bibitem[17]{KS} M. Kar, M. Sini, {An $H_s^p(\curl; \O)$
estimate for the Maxwell system,} Math. Ann. 364 (2016) 559-587.

\bibitem[18]{KY1}  H. Kozono, T. Yanagisawa,  {$L^r$-variational inequality for vector
                   fields and the Helmholtz-Weyl decomposition in bounded domains,}
                    Indiana Univ. Math. J. {58}(4)  (2009) 1853-1920.

\bibitem[19]{MM} M. Mitrea, {Sharp Hodge decompositions,
Maxwell's equations, and vector Poisson problems on
nonsmooth, three-dimensional Riemannian manifolds,}
Duke Math. J. 125(3) (2004) 467-547.

\bibitem[20]{MMP} D. Mitrea, M. Mitrea, J. Pipher,
Vector potential theory on non-smooth domains in $\mathbb{R}^3$
and applications
to electromagnetic scattering,
J. Fourier Anal. Appl. 3 (1997) 131-192.

\bibitem[21]{MMY} D. Mitrea, M. Mitrea, L. Yan,
{Boundary value problems for the Laplacian in convex and semiconvex domains,}
J. Funct. Anal. 258 (2010) 2507-2585.

\bibitem[22] {Ne}J. Necas, {Les m\'{e}thodes directes en th\'{e}orie des \'{e}quations \'{e}lliptiques,} Academia,
Prague, 1967.

\bibitem[23]{ST} E. M. Stein,
{\it Singular integrals and differentiability properties of
functions},   Princeton University Press, Princeton, 1970.

\bibitem[24]{ST2} E. M. Stein,
{\it Harmonic Analysis: Real Variable Methods,
Orthogonality, and Oscillatory Integrals,}
Princeton University Press, Princeton NJ, 1993.

\bibitem[25]{VE} G. Verchota, {Layer potentials
and regularity for the Dirichlet problem for
Laplace's equation in Lipschitz domains,}
J. Funct. Anal. 59 (1984) 572-611.

\bibitem[26]{W1} W. von Wahl, {Estimating $\nabla\u$ by $\dv\u$
and $\curl\u$,}  Math. Methods Appl. Sci. {15} (1992) 123-143.

\end{thebibliography}

\end {document}